\newtheoremstyle{localthm}
	{5pt} 
	{5pt} 
	{\sl} 
	{} 
	{\bf} 
	{{\rm.}} 
	{.7em} 
	{} 
\theoremstyle{localthm}
\newtheorem{Theorem}{Theorem}
\newtheorem{Corollary}[Theorem]{Corollary}
\newtheorem{Lemma}[Theorem]{Lemma}
\newtheoremstyle{localrem}
	{5pt} 
	{5pt} 
	{\rm} 
	{} 
	{\bf} 
	{{\rm.}} 
	{.7em} 
	{} 
\theoremstyle{localrem}
\newtheorem{Remark}[Theorem]{Remark}
\newtheorem{Definition}[Theorem]{Definition}
\newtheorem{Example}[Theorem]{Example}
\newcommand{\Ex}{\operatorname{\mathbb{E}}}
\renewcommand{\Pr}{\operatorname{\mathbb{P}}}
\newcommand{\R}{\mathbb{R}}
\renewcommand{\d}{\mathrm{d}}
\newcommand{\Rbar}{\bar{\mathbb{R}}}
\newcommand{\BB}{\mathcal{B}}
\newcommand{\LL}{\mathcal{L}}
\newcommand{\XX}{\mathcal{X}}
\newcommand{\lest}{\le_{\rm st}}
\newcommand{\lelr}{\le_{\rm lr}}
\newcommand{\gelr}{\ge_{\rm lr}}
\newcommand{\Snw}{S^{\rm nw}}
\newcommand{\Sse}{S^{\rm se}}
\newcommand{\Kw}{K^{\rm w}}
\newcommand{\Ke}{K^{\rm e}}
\newcommand{\qw}{q^{\rm w}}
\newcommand{\qe}{q^{\rm e}}
\renewcommand{\d}{\mathrm{d}}
\newcommand{\supp}{\mathrm{supp}}
\begin{document}
\addtolength{\baselineskip}{0.15\baselineskip}

\title{On Stochastic Orders and Total Positivity}
\author{Lutz D\"umbgen and Alexandre M\"osching\\
	University of Bern and F.\ Hoffmann-La Roche Ltd, Basel}
\date{\today}

\maketitle

\begin{abstract}
The usual stochastic order and the likelihood ratio order between probability distributions on the real line are reviewed in full generality. In addition, for the distribution of a random pair $(X,Y)$, it is shown that the conditional distributions of $Y$, given $X = x$, are increasing in $x$ with respect to the likelihood ratio order if and only if the joint distribution of $(X,Y)$ is totally positive of order two (TP2) in a certain sense. It is also shown that these three types of constraints are stable under weak convergence, and that weak convergence of TP2 distributions implies convergence of the conditional distributions just mentioned.
\end{abstract}

\paragraph{Keywords:}
Conditional distribution, likelihood ratio order, order constraint, weak convergence.

\paragraph{AMS 2000 subject classifications:}
60E15, 62E10, 62H05.

\section{Introduction}

In nonparametric statistics, shape-constraints on distributions offer an interesting alternative to more traditional smoothness constraints. Specifically, consider a univariate regression setting with a generic observation $(X,Y) \in \R \times \R$. In many applications it is a natural constraint that the conditional distribution $Q(\cdot \,|\, x) := \LL(Y \,|\, X = x)$ is increasing in $x \in \R$ with respect to some order on the set of probability distributions on the real line. \cite{Moesching_Duembgen_2020} analyze estimators of $Q(\cdot \,|\, x)$ under the sole assumption that it is increasing in $x$ with respect to the usual stochastic order, and \cite{Moesching_Duembgen_2022} investigate the same estimation problem under the stronger constraint that $Q(\cdot \,|\, x)$ is increasing in $x$ with respect to the likelihood ratio order.

The likelihood ratio order has also received attention in nonparametric methods for two- or $k$-sample problems, for instance, \cite{Dardanoni_Forcina_1998}, \cite{Westling_etal_2022} and \cite{Hu_etal_2022}.

One goal of this paper is to review stochastic order, likelihood ratio order and their properties and relations in full generality, extending previous results presented by \cite{Shaked_Shantikumar_2007}. Here are precise definitions of these orders, where $Q_1, Q_2$ denote arbitrary probability distributions on the real line:
\begin{description}
\item[Stochastic order:] $Q_1 \lest Q_2$ if $Q_1((y,\infty)) \le Q_2((y,\infty))$ for all $y \in \R$.
\item[Likelihood ratio order:] $Q_1 \lelr Q_2$ if $Q_1, Q_2$ admit densities $g_1, g_2$ with respect to some dominating measure such that $g_2/g_1$ is increasing on $\{g_1 + g_2 > 0\}$.
\end{description}
It is known and will be explained later that $Q_1 \lelr Q_2$ implies that $Q_1 \lest Q_2$. Note that likelihood ratio ordering is familiar from discriminant analysis and mathematical statistics, see \cite{Karlin_Rubin_1956} and \cite{Lehmann_Romano_2005}. Suppose that $Q_1 \lelr Q_2$. If we observe $(Y,C) \in \R \times \{1,2\}$, where $\LL(Y \,|\, C = c) = Q_c$, then the posterior probability $\Pr(C = 2 \,|\, Y)$ is isotonic in $Y$. If we observe $Y \sim Q_\theta$ with an unknown parameter $\theta \in \{1,2\}$, then an optimal test of the null hypothesis that $\theta = 1$ versus the alternative hypothesis that $\theta = 2$ rejects the former for large values of $Y$. Furthermore, likelihood ratio ordering is a frequent assumption or implication of models in mathematical finance, see \cite{Beare_2015} and \cite{Jewitt_1991}.

The notion of likelihood ratio order and its relation to stochastic order is reviewed thoroughly in Section~\ref{Sec:LROrder}, showing that it defines a partial order on the set of all probability measures on the real line which is preserved under weak convergence. No restrictions on the distributions are needed. It is also explained that two distributions are likelihood ratio ordered if and only if the corresponding receiving operator characteristic (ROC) is a concave curve, and this is related to convexity of their ordinal dominance curve. The latter results generalize previous work, e.g.\ by \cite{Westling_etal_2022}.

Another goal of this paper is to fully understand the relationship between the distribution of a random pair $(X,Y)$ and order constraints on the conditional distributions $Q(\cdot \,|\, x)$, $x \in \R$. In Section~\ref{Sec:Bivariate} it is shown how to characterize $\LL(X,Y)$ such that $Q(\cdot \,|\, x)$ is increasing in $x$ with respect to stochastic order or likelihood ratio order. In particular, we explain the connection between likelihood ratio ordering and total positivity of order two (TP2) of bivariate distributions. Finally, it is shown that these properties of bivariate distributions are preserved under weak convergence, and that this implies convergence of the conditional distributions. To the best of our knowledge, most of the material in Section~\ref{Sec:Bivariate} is completely new.

All proofs are deferred to Section~\ref{Sec:Proofs}.

\section{Likelihood ratio order}
\label{Sec:LROrder}

Let $Q_1$ and $Q_2$ be probability measures on $\R$, equipped with its Borel $\sigma$-field $\BB$. Our first result provides various equivalent conditions for $Q_1 \lelr Q_2$, the first of which is the definition given in the introduction.

\begin{Theorem}
\label{Thm:EquivDef}
The following four conditions on $Q_1$ and $Q_2$ are equivalent:\\[1ex]
\textbf{(i)} There exist a $\sigma$-finite measure $\mu$ and densities $g_j = \d Q_j/\d\mu$ ($j = 1,2$) such that
\[
	g_2/g_1 \ \ \text{is isotonic on} \ \ \{g_1 + g_2 > 0\} .
\]
\textbf{(ii)} There exist a $\sigma$-finite measure $\mu$ and densities $g_j = \d Q_j/\d\mu$ ($j=1,2$) such that
\[
	g_1(y) g_2(x) \ \le \ g_1(x) g_2(y) \quad\text{whenever} \ x < y .
\]
\textbf{(iii)} For arbitrary sets $A,B \in \BB$ such that $A < B$ (element-wise),
\[
	Q_1(B)Q_2(A) \ \le \ Q_1(A)Q_2(B) .
\]
\textbf{(iv)} There exists a dense subset $D$ of $\R$ such that for all intervals $A = (x,y]$ and $B = (y,z]$ with endpoints $x < y < z$ in $D$,
\[
	Q_1(B)Q_2(A) \ \le \ Q_1(A)Q_2(B) .
\]
\end{Theorem}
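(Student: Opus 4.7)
The plan is to establish the cycle (i) $\Leftrightarrow$ (ii), (ii) $\Rightarrow$ (iii), (iii) $\Rightarrow$ (iv), (iv) $\Rightarrow$ (i). (I read the inequality in (iii) as $Q_2(A) Q_1(B) \le Q_1(A) Q_2(B)$, treating the displayed form as a typo, since otherwise (iii) would reduce to stochastic dominance and not be equivalent to (i).) The first three links are essentially mechanical. For (i) $\Leftrightarrow$ (ii), adopting the conventions $0/0 := 0$ and $a/0 := \infty$ for $a > 0$, isotonicity of $g_2/g_1$ on $\{g_1 + g_2 > 0\}$ is exactly the cross-product inequality in (ii), possibly after redefining the densities on the $\mu$-null set $\{g_1 = g_2 = 0\}$. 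For (ii) $\Rightarrow$ (iii), choose densities as in (ii) and apply Fubini:
\[
Q_1(A) Q_2(B) - Q_2(A) Q_1(B) \ = \ \iint_{A \times B} \bigl[g_1(x) g_2(y) - g_2(x) g_1(y)\bigr] \, \d\mu(x) \, \d\mu(y) \ \ge \ 0,
\]
since $A < B$ forces $x < y$ throughout. The implication (iii) $\Rightarrow$ (iv) is immediate by taking $D = \R$.

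The substance lies in (iv) $\Rightarrow$ (i), which I would attack in three steps. First, extend (iv) from adjacent to non-adjacent $D$-intervals. Given $a_1 < b_1 < a_2 < b_2$ in $D$, apply (iv) at $y = b_1$ (with $A = (a_1, b_1]$, $B = (b_1, a_2]$) and then at $y = a_2$ (with $A = (b_1, a_2]$, $B = (a_2, b_2]$) to obtain, whenever the $Q_1$-masses below are positive,
\[
\frac{Q_2((a_1, b_1])}{Q_1((a_1, b_1])} \ \le \ \frac{Q_2((b_1, a_2])}{Q_1((b_1, a_2])} \ \le \ \frac{Q_2((a_2, b_2])}{Q_1((a_2, b_2])}.
\]
This yields $Q_2((a_1, b_1]) Q_1((a_2, b_2]) \le Q_1((a_1, b_1]) Q_2((a_2, b_2])$. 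Degenerate cases (middle interval with vanishing $Q_1$-mass, or outer interval carrying no $Q_1$-mass) have to be inspected separately, but the inequality persists.

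Second, set $\mu := Q_1 + Q_2$ and let $h_j := \d Q_j / \d \mu$, so that $h_1 + h_2 = 1$ $\mu$-a.e. Since $\mu$ is a finite Borel measure on $\R$, the Lebesgue--Besicovitch differentiation theorem gives, for $\mu$-almost every $y$,
\[
h_j(y) \ = \ \lim_{\delta \downarrow 0} \, \frac{Q_j(I_\delta)}{\mu(I_\delta)} \qquad (j = 1, 2),
\]
for any sequence of intervals $I_\delta = (a, b] \ni y$ with $b - a < \delta$; density of $D$ lets us restrict to $a, b \in D$. Given two such $\mu$-typical points $y_1 < y_2$, apply the non-adjacent inequality from step one to shrinking $D$-intervals $I_k \ni y_k$ with $I_1 < I_2$, divide by $\mu(I_1) \mu(I_2)$ and pass to the limit to obtain
\[
h_2(y_1) h_1(y_2) \ \le \ h_1(y_1) h_2(y_2)
\]
for $\mu \otimes \mu$-almost every $(y_1, y_2)$ with $y_1 < y_2$.

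Third and hardest, this $\mu \otimes \mu$-a.e. cross inequality has to be upgraded to a literal pointwise isotonicity of the density ratio on $\{g_1 + g_2 > 0\}$. The standard route: for each rational $c$, the level set $E_c := \{y : h_2(y) > c \, h_1(y)\}$ must, up to $\mu$-measure zero, be an upper half-line, because a $\mu$-typical point of $E_c$ cannot have a $\mu$-typical successor in its complement; let $\alpha_c$ be the corresponding threshold. The function $\tilde r(y) := \sup\{c \in \mathbb{Q} : y > \alpha_c\}$ is then Borel and isotonic, and agrees $\mu$-a.e. with $h_2/h_1$. Finally, one constructs Borel densities $g_1, g_2$ of $Q_1, Q_2$ with respect to $\mu$ that differ from $h_1, h_2$ only on a $\mu$-null set and satisfy $g_2/g_1 = \tilde r$ on $\{g_1 + g_2 > 0\}$ (so that the ratio equals $0$ where $g_1 > 0 = g_2$ and $\infty$ where $g_2 > 0 = g_1$, consistently with $\tilde r$). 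This last bookkeeping --- producing a literal pointwise isotonic ratio in the sense of (i), rather than one holding only $\mu$-a.e., and handling the endpoints of $\supp(\mu)$ and the $\alpha_c$ correctly --- is the main obstacle in the proof.
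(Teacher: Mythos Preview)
Your proof is correct but diverges from the paper's in the key step (iv) $\Rightarrow$ (i). The paper avoids Lebesgue--Besicovitch differentiation and the subsequent a.e.-to-pointwise cleanup entirely by proving a constructive Radon--Nikodym lemma: if finite measures $\nu \le \mu$ satisfy $\mu((y,z])\,\nu((x,y]) \le \mu((x,y])\,\nu((y,z])$ for all $x<y<z$, then
\[
	f(x) \ := \ \sup_{a<x} \, \frac{\nu((a,x])}{\mu((a,x])}
\]
is automatically isotonic and is shown by an elementary sandwiching argument to be a genuine density of $\nu$ with respect to $\mu$. Applying this with $\mu = Q_1 + Q_2$ and $\nu = Q_2$ (after first extending (iv) from the dense $D$ to all of $\R$ by approximation) yields an isotonic density $\rho$ of $Q_2$ with respect to $Q_1+Q_2$ directly; setting $g_1 := (1-\rho)h$ and $g_2 := \rho h$ with $h := \d(Q_1+Q_2)/\d\mu$ then works for \emph{any} $\sigma$-finite dominating $\mu$. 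Thus the paper obtains pointwise isotonicity for free, whereas you pay for it with the level-set bookkeeping you correctly flag as the main obstacle. Your route is standard measure theory; the paper's is more self-contained, gives as a by-product the strengthening that (i) holds for every dominating $\mu$ (their auxiliary condition~(vi)), and yields an explicit density formula that they reuse heavily in the bivariate part of the paper.
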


\begin{Remark}
\cite{Shaked_Shantikumar_2007} restrict their attention to probability measures which are either discrete or dominated by Lebesgue measure. Their definition of likelihood ratio order corresponds to properties (i--ii) with $\mu$ being counting measure or Lebesgue measure on the real line.
\end{Remark}

\begin{Remark}[Weak convergence]
\label{Rem:Weak.convergence1}
An important aspect of Theorem~\ref{Thm:EquivDef} is that conditions~(iii-iv) do not involve an explicit dominating measure or explicit densities of $Q_1$ and $Q_2$. In particular, if $(Q_{n1})_n$ and $(Q_{n2})_n$ are sequences of distributions on $\R$ converging weakly to $Q_1$ and $Q_2$, respectively, and if $Q_{n1} \lelr Q_{n2}$ for all $n$, then $Q_1 \lelr Q_2$ as well. This follows from (iv) with $D$ being the set of all $x \in \R$ such that $Q_1(\{x\}) = 0 = Q_2(\{x\})$.
\end{Remark}

\begin{Remark}[Stochastic order]
\label{Rem:ST_LR}
The likelihood ratio order is stronger than the stochastic order in the sense that $Q_1 \lest Q_2$ if $Q_1 \lelr Q_2$. This follows immediately from condition~(iii) applied to $A:=(-\infty,y]$ and $B:=(y,\infty)$ for arbitrary $y \in \R$, because $Q_1(B)Q_2(A) = Q_1(B) - Q_1(B)Q_2(B)$ and $Q_1(A)Q_2(B) = Q_2(B) - Q_1(B)Q_2(B)$.
\end{Remark}

The reverse statement is false in general, but the likelihood ratio order of two distributions is tightly connected to stochastic order of domain-conditional distributions. 

\begin{Lemma}
\label{Lem:LR_iff_condST} The following four properties of $Q_1$ and $Q_2$ are equivalent:\\[1ex]
\textbf{(i)} \ $Q_1 \lelr Q_2$.\\
\textbf{(ii)} \ $Q_1(\cdot\,|\,C) \lelr Q_2(\cdot\,|\,C)$ for all sets $C \in \BB$ such that $Q_1(C),Q_2(C)>0$.\\
\textbf{(iii)} \ $Q_1(\cdot\,|\,C) \lest Q_2(\cdot\,|\,C)$ for all sets $C \in \BB$ such that $Q_1(C),Q_2(C)>0$.\\
\textbf{(iv)} \ There exists a dense subset $D$ of $\R$ such that $Q_1(\cdot \,|\, (x,y]) \lest Q_2(\cdot \,|\, (x,y])$ for arbitrary points $x < y$ in $D$ such that $Q_1((x,y]), Q_2((x,y]) > 0$.
\end{Lemma}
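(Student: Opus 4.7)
The plan is to prove the implications cyclically: (i) $\Rightarrow$ (ii) $\Rightarrow$ (iii) $\Rightarrow$ (iv) $\Rightarrow$ (i). The first three are nearly immediate given Theorem~\ref{Thm:EquivDef} and the remark on stochastic order; the substantive step will be (iv) $\Rightarrow$ (i), which I will reduce to condition~(iv) of Theorem~\ref{Thm:EquivDef}.

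For (i) $\Rightarrow$ (ii), I pick a $\sigma$-finite measure $\mu$ and densities $g_j = \d Q_j/\d\mu$ with $g_2/g_1$ isotonic on $\{g_1 + g_2 > 0\}$, as in Theorem~\ref{Thm:EquivDef}(i). For any $C$ with $Q_1(C), Q_2(C) > 0$, the conditional distributions $Q_j(\cdot\,|\,C)$ are dominated by $\mu$ with densities $\tilde g_j := \mathbf{1}_C \, g_j / Q_j(C)$. On $\{\tilde g_1 + \tilde g_2 > 0\} \subseteq C \cap \{g_1 + g_2 > 0\}$, the ratio $\tilde g_2/\tilde g_1$ equals $\bigl(Q_1(C)/Q_2(C)\bigr) \cdot g_2/g_1$ up to the same convention, hence is still isotonic. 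Thus Theorem~\ref{Thm:EquivDef}(i) yields $Q_1(\cdot\,|\,C) \lelr Q_2(\cdot\,|\,C)$. Step (ii) $\Rightarrow$ (iii) is exactly Remark~\ref{Rem:ST_LR} applied to the conditional distributions, and (iii) $\Rightarrow$ (iv) is immediate by taking $D = \R$.

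For (iv) $\Rightarrow$ (i), it suffices to verify condition~(iv) of Theorem~\ref{Thm:EquivDef} for the very same dense set $D$. Fix $x < y < z$ in $D$ and set $A := (x,y]$, $B := (y,z]$, $C := (x,z] = A \cup B$ (disjointly). If $Q_1(C) = 0$ or $Q_2(C) = 0$, then one of $Q_1(A), Q_1(B), Q_2(A), Q_2(B)$-pairs forces both sides of the inequality to vanish and there is nothing to show. Otherwise $Q_1(C), Q_2(C) > 0$, so (iv) applies to $C$: $Q_1(\cdot\,|\,C) \lest Q_2(\cdot\,|\,C)$. Applied to the set $(y, \infty)$, which intersects the support of both conditional distributions in $B$, this gives
\[
	\frac{Q_1(B)}{Q_1(C)} \ = \ Q_1\bigl((y,\infty) \,|\, C\bigr) \ \le \ Q_2\bigl((y,\infty) \,|\, C\bigr) \ = \ \frac{Q_2(B)}{Q_2(C)} .
\]
Cross-multiplying and substituting $Q_j(C) = Q_j(A) + Q_j(B)$ yields $Q_1(B) Q_2(A) \le Q_1(A) Q_2(B)$, which is precisely condition~(iv) of Theorem~\ref{Thm:EquivDef}. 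Consequently $Q_1 \lelr Q_2$.

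The only step with any content is (iv) $\Rightarrow$ (i), and the main obstacle there is administrative rather than conceptual: one has to handle the degenerate cases $Q_j((x,z]) = 0$ separately so as not to invoke (iv) when the conditional distributions are not defined. Everything else is a direct translation between the unconditional likelihood-ratio condition and its conditional stochastic counterpart.
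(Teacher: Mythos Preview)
Your proof is correct and follows essentially the same cyclic route as the paper, including the key step (iv) $\Rightarrow$ (i) via $C = (x,z]$ and reduction to $Q_1(B\,|\,C) \le Q_2(B\,|\,C)$. The only cosmetic difference is in (i) $\Rightarrow$ (ii): you use the density characterization of Theorem~\ref{Thm:EquivDef}(i), whereas the paper uses the Borel-set characterization~(iii) and the observation that $A\cap C < B\cap C$; both arguments are equally short.
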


The next result shows that the relation $\lelr$ defines indeed a partial order on the space of arbitrary probability measures on the real line.

\begin{Lemma}
\label{Lem:LR_Partial_Order}
For arbitrary probability measures $Q_1, Q_2, Q_3$ on $\R$,
\begin{align*}
	&Q_1 \lelr Q_1
		&& \text{(reflexivity)} ; \\
	&Q_1 \lelr Q_2 \ \ \text{and} \ \ Q_2\lelr Q_1 \ \ \text{implies that} \ \ Q_1 = Q_2
		&& \text{(antisymmetry)} ; \\
	&Q_1 \lelr Q_2 \ \ \text{and} \ \ Q_2 \lelr Q_3 \ \ \text{implies that} \ \ Q_1 \lelr Q_3
		&& \text{(transitivity)} .
\end{align*}
\end{Lemma}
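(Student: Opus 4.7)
I will work throughout with the measure-free characterization (iii) of Theorem~\ref{Thm:EquivDef}: $Q_1 \lelr Q_2$ holds iff $Q_1(B) Q_2(A) \le Q_1(A) Q_2(B)$ for every Borel pair $A < B$. Reflexivity is then the tautology $Q_1(B) Q_1(A) = Q_1(A) Q_1(B)$. For antisymmetry, applying (iii) to both $Q_1 \lelr Q_2$ and $Q_2 \lelr Q_1$ squeezes the two inequalities into the equality $Q_1(A) Q_2(B) = Q_1(B) Q_2(A)$ for every Borel $A < B$; specializing to $A = (-\infty, y]$ and $B = (y, \infty)$ yields $F_1(y)\bigl(1 - F_2(y)\bigr) = F_2(y)\bigl(1 - F_1(y)\bigr)$, i.e.\ $F_1 \equiv F_2$, and hence $Q_1 = Q_2$.

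The real work is transitivity. Given $Q_1 \lelr Q_2 \lelr Q_3$ and a Borel pair $A < B$, I want to deduce $Q_1(B) Q_3(A) \le Q_1(A) Q_3(B)$ from
\[
	Q_1(B) Q_2(A) \le Q_1(A) Q_2(B)
	\quad\text{and}\quad
	Q_2(B) Q_3(A) \le Q_2(A) Q_3(B).
\]
When both $Q_2(A)$ and $Q_2(B)$ are positive the argument is a one-liner: multiply the two inequalities and divide through by $Q_2(A) Q_2(B) > 0$. If exactly one of $Q_2(A), Q_2(B)$ vanishes, the other hypothesis forces $Q_3(A) = 0$ (when $Q_2(A) = 0 < Q_2(B)$) or $Q_1(B) = 0$ (when $Q_2(B) = 0 < Q_2(A)$), and the conclusion is again immediate.

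The obstacle is the doubly degenerate case $Q_2(A) = Q_2(B) = 0$, in which both hypotheses collapse to $0 \le 0$ and say nothing directly about $Q_1, Q_3$. My remedy is to enlarge $A$ and $B$ to a separating partition of the line: since $A < B$, there is a $t \in \R$ with $A \subseteq (-\infty, t] =: \bar A$ and $B \subseteq (t, \infty) =: \bar B$, with the obvious adjustment if $\sup A$ is not attained. Applying (iii) for $Q_1 \lelr Q_2$ to the pair $\bar A < B$ and for $Q_2 \lelr Q_3$ to the pair $A < \bar B$, the hypotheses $Q_2(B) = 0$ and $Q_2(A) = 0$ now yield genuine information,
\[
	Q_1(B) Q_2(\bar A) \le 0
	\quad\text{and}\quad
	Q_2(\bar B) Q_3(A) \le 0.
\]
Since $Q_2(\bar A) + Q_2(\bar B) = 1$, at least one summand is positive, forcing $Q_1(B) = 0$ or $Q_3(A) = 0$; either way $Q_1(B) Q_3(A) = 0 \le Q_1(A) Q_3(B)$. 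This separator trick is the only nontrivial ingredient in the proof; everything else is bookkeeping with (iii).
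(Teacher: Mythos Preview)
Your proof is correct. Reflexivity and antisymmetry are handled essentially as in the paper (the paper phrases antisymmetry via the implication $\lelr \Rightarrow \lest$ from Remark~\ref{Rem:ST_LR}, but your specialization to $A = (-\infty,y]$, $B = (y,\infty)$ is exactly that implication spelled out).

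For transitivity, however, you take a genuinely different route. The paper works with densities: it sets $\mu = Q_1 + Q_2 + Q_3$, invokes condition~(vi) from the proof of Theorem~\ref{Thm:EquivDef} to obtain densities $g_j$ with $g_2/g_1$ and $g_3/g_2$ isotonic on the relevant sets, and then decomposes $\{g_1 + g_3 > 0\}$ into three ordered pieces $A < B' < C$ according to the sign pattern of $g_2$; on the middle piece $g_3/g_1 = (g_3/g_2)(g_2/g_1)$ is isotonic as a product of isotonic $[0,\infty]$-valued factors, and on the outer pieces $g_3/g_1$ equals $0$ or $\infty$. Your argument, by contrast, never leaves condition~(iii): the case split on the positivity of $Q_2(A), Q_2(B)$ and your separator trick for the doubly degenerate case $Q_2(A) = Q_2(B) = 0$ together do the job. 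The separator partition $\bar A \cup \bar B = \R$ plays the same structural role as the paper's decomposition by the sign of $g_2$. Your version is more elementary in that it avoids the Radon--Nikodym machinery and the auxiliary condition~(vi); the paper's density argument, on the other hand, makes the underlying reason for transitivity visually transparent --- it is literally that a product of isotonic ratios is isotonic.
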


Another interesting aspect of the likelihood ratio order is its connection with the \textsl{receiver operating characteristic (ROC)} of a pair $(Q_1,Q_2)$, that means, the set
\[
	\mathrm{ROC}(Q_1,Q_2) \ := \ \bigl\{ \bigl( Q_1(H), Q_2(H) \bigr) : H \in \mathcal{H} \bigr\}
\]
with $\mathcal{H}$ denoting the set of left-bounded half-lines, augmented by $\emptyset$ and $\R$,
\[
	\mathcal{H}
	\ := \ \bigl\{ (y,\infty) : y \in \R \bigr\}
		\cup \bigl\{ [y,\infty) : y \in \R \bigr\}
		\cup \{\emptyset,\R\} .
\]
Note that the family $\mathcal{H}$ is totally ordered by inclusion. With the distribution function $G_j$ of $Q_j$ ($j=1,2$) one may also write
\begin{align*}
	&\mathrm{ROC}(Q_1,Q_2) \\
	& \ = \ \bigl\{ \bigl( 1 - G_1(y), 1 - G_2(y) \bigr) : y \in \Rbar \bigr\}
		\cup \bigl\{ \bigl( 1 - G_1(y\,-), 1 - G_2(y\,-) \bigr) : y \in \R \bigr\} ,
\end{align*}
where $\Rbar := [-\infty,\infty]$ and $G_j(-\infty) := 0$, $G_j(\infty) := 1$. Thus we take the freedom to refer to $\mathrm{ROC}(Q_1,Q_2)$ as the ROC \textsl{curve} of $Q_1$ and $Q_2$, imagining a (possibly non-continuous) curve within the unit square $[0,1]\times [0,1]$, connecting the points $(1,1)$ and $(0,0)$.

Obviously, the ROC curve is isotonic in the sense that if $(a_1,a_2), (b_1,b_2) \in \mathrm{ROC}(Q_1,Q_2)$, then $(a_1,a_2) \le (b_1,b_2)$ or $(a_1,a_2) \ge (b_1,b_2)$ component-wise. By means of Theorem~\ref{Thm:EquivDef}, one can easily show that likelihood ratio order is equivalent to a concavity property of the ROC curve.

\begin{Corollary}
\label{Cor:LR_ROC}
Two distributions $Q_1, Q_2$ satisfy $Q_1 \lelr Q_2$ if and only if $\mathrm{ROC}(Q_1,Q_2)$ is concave in the following sense: If $(a_1,a_2)$, $(b_1,b_2)$ and $(c_1,c_2)$ are three different points in $\mathrm{ROC}(Q_1,Q_2)$ with $a_1 \le b_1 \le c_1$ and $a_2 \le b_2 \le c_2$, then
\[
	\frac{b_2 - a_2}{b_1 - a_1} \ \ge \ \frac{c_2 - b_2}{c_1 - b_1} .
\]
\end{Corollary}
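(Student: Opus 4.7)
The plan is to identify each point of $\mathrm{ROC}(Q_1, Q_2)$ with a half-line $H \in \mathcal{H}$ via the monotone map $\Phi(H) := (Q_1(H), Q_2(H))$, and then to reduce the claimed ratio condition to the cross-product condition in Theorem~\ref{Thm:EquivDef}(iii--iv) applied to two adjacent intervals.

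For the ``only if'' direction, I would assume $Q_1 \lelr Q_2$ and take three distinct points $(a_1, a_2), (b_1, b_2), (c_1, c_2) \in \mathrm{ROC}(Q_1, Q_2)$ with $a_j \le b_j \le c_j$ for $j = 1, 2$. Picking any preimages $H_a, H_b, H_c \in \mathcal{H}$ under $\Phi$, one uses the fact that $\mathcal{H}$ is totally ordered by inclusion, that $\Phi$ is monotone, and that the three images are pairwise distinct to conclude that the only ordering of $H_a, H_b, H_c$ compatible with $a_j \le b_j \le c_j$ is the strict nesting $H_a \subsetneq H_b \subsetneq H_c$ (any other permutation forces, via monotonicity of $\Phi$, an equality between two of the image points). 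Setting $A := H_c \setminus H_b$ and $B := H_b \setminus H_a$, a brief check over the possible forms of half-lines in $\mathcal{H}$ shows that $A$ and $B$ are (possibly empty) intervals with $A < B$ element-wise. Theorem~\ref{Thm:EquivDef}(iii) then yields $Q_1(B)\,Q_2(A) \le Q_1(A)\,Q_2(B)$, and substituting $Q_j(A) = c_j - b_j$ and $Q_j(B) = b_j - a_j$ gives exactly the claimed ratio inequality.

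For the converse, I would verify condition~(iv) of Theorem~\ref{Thm:EquivDef} with the dense set $D := \{y \in \R : Q_1(\{y\}) = Q_2(\{y\}) = 0\}$, whose complement is countable. For $x < y < z$ in $D$ and $A := (x, y]$, $B := (y, z]$, the nested half-lines $(z, \infty) \subseteq (y, \infty) \subseteq (x, \infty)$ produce ROC points $(a_j), (b_j), (c_j)$ with $Q_j(A) = c_j - b_j$ and $Q_j(B) = b_j - a_j$. When these three points are pairwise distinct, the hypothesized concavity rewritten as $(b_2 - a_2)(c_1 - b_1) \ge (b_1 - a_1)(c_2 - b_2)$ is exactly $Q_1(B)\,Q_2(A) \le Q_1(A)\,Q_2(B)$. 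When two of the ROC points coincide, say $(a_1, a_2) = (b_1, b_2)$, we immediately have $Q_1(B) = Q_2(B) = 0$, so both sides of the target inequality vanish; the case $(b_1, b_2) = (c_1, c_2)$ is symmetric.

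The main point I expect to require care is the strict-nesting assertion in the forward direction, which rests on the interplay between the total order on $\mathcal{H}$, the monotonicity of $\Phi$, and the distinctness of the three chosen ROC points. The coincidence subcases in the converse must also be treated explicitly because the ratio form of concavity in the statement is indeterminate when two ROC points share a coordinate, but once recast in cross-product form these cases become trivial.
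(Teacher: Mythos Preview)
Your proposal is correct and follows essentially the same route as the paper's proof: in both directions one passes between the ratio inequality and the cross-product inequality $(b_1-a_1)(c_2-b_2) \le (c_1-b_1)(b_2-a_2)$ and identifies the latter with condition~(iii)/(iv) of Theorem~\ref{Thm:EquivDef} via the differences $Q_j(H_c\setminus H_b)=c_j-b_j$ and $Q_j(H_b\setminus H_a)=b_j-a_j$. Two cosmetic remarks: the restriction to the dense set $D$ of atom-free points in the converse is unnecessary (the paper simply takes $H_a=(z,\infty)$, $H_b=(y,\infty)$, $H_c=(x,\infty)$ for arbitrary $x<y<z$), and the passage from the cross-product to the ratio form in the forward direction is exactly the content of the paper's Lemma~\ref{Lem:Ratios.Products}, which uses that $(b_1-a_1,b_2-a_2)$ and $(c_1-b_1,c_2-b_2)$ are both nonzero by distinctness of the three ROC points.
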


An object related to the ROC curve of $Q_1,Q_2$ is the \textsl{ordinal dominance curve} $H_{G_1,G_2} : [0,1] \to [0,1]$,
\[
	H_{G_1,G_2}(\alpha) \ := \ G_2^{}\bigl(G_1^{-1}(\alpha)\bigr) ,
\]
where $G_j^{-1}(\alpha) := \min\{y \in \Rbar : G_j^{}(y) \ge \alpha\}$. The next result has been shown by \cite{Lehmann_Rojo_1992} in the special case of $Q_2 \ll Q_1$ and $G_1, G_2$ being continuous and strictly increasing, where $Q_2 \ll Q_1$ means that $Q_2$ admits a density with respect to $Q_1$. \cite{Westling_etal_2022} proved the same result under the assumption that $Q_2 \ll Q_1$ with a density $\mathrm{d}Q_2 / \mathrm{d}Q_1$ which is continuous on the support of $Q_1$.

\begin{Corollary}
\label{Cor:Equiv_LR_ODC}
Suppose that $Q_2 \ll Q_1$. Then $Q_1 \lelr Q_2$ if and only if the ordinal dominance curve $H_{G_1,G_2}$ is convex on the image of $G_1$, i.e.\ the set $G_1(\Rbar) = \{G_1(y) : y \in \Rbar\}$.
\end{Corollary}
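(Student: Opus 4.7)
My plan is to prove the equivalence by matching the chord-slope reformulation of Theorem~\ref{Thm:EquivDef}(iv) with the three-point characterisation of convexity of $H$ on the image $G_1(\Rbar)$. Let $D := \{y \in \R : Q_1(\{y\}) = 0\}$. Since $Q_1$ has at most countably many atoms, $D$ is dense in $\R$; and because $Q_2 \ll Q_1$, every $y \in D$ also satisfies $Q_2(\{y\}) = 0$, so both $G_1$ and $G_2$ are continuous at every point of $D$.

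The central calculation is as follows. For $y_1 < y_2 < y_3$ in $D$, set $\alpha_i := G_1(y_i)$ and $\beta_i := G_2(y_i)$. Continuity gives $Q_j((y_i,y_{i+1}]) = G_j(y_{i+1}) - G_j(y_i)$, so condition~(iv) of Theorem~\ref{Thm:EquivDef} becomes
\[
	(\beta_2 - \beta_1)(\alpha_3 - \alpha_2) \ \le \ (\alpha_2 - \alpha_1)(\beta_3 - \beta_2) .
\]
If $\alpha_i = \alpha_{i+1}$ for some $i$, then $(y_i,y_{i+1}]$ is $Q_1$-null, hence $Q_2$-null by absolute continuity, so $\beta_i = \beta_{i+1}$ and the inequality is trivial. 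When $\alpha_1 < \alpha_2 < \alpha_3$, it is equivalent to the chord-slope inequality
\[
	\frac{\beta_2 - \beta_1}{\alpha_2 - \alpha_1} \ \le \ \frac{\beta_3 - \beta_2}{\alpha_3 - \alpha_2} .
\]
Moreover, $Q_2 \ll Q_1$ forces $G_2$ to be constant on every level set of $G_1$, so $H(\alpha_i) = \beta_i$; this slope condition is exactly the three-point convexity of $H$ at $\alpha_1, \alpha_2, \alpha_3 \in G_1(\Rbar)$.

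For ($\Leftarrow$), convexity of $H$ on $G_1(\Rbar)$ supplies the chord-slope inequality at every distinct triple $\alpha_i = G_1(y_i)$ with $y_i \in D$, the degenerate cases being handled as above; Theorem~\ref{Thm:EquivDef}(iv) then yields $Q_1 \lelr Q_2$. For ($\Rightarrow$), given three points $\alpha_1 < \alpha_2 < \alpha_3$ in $G_1(\Rbar)$, I choose $y_1 < y_2 < y_3$ in $\Rbar$ with $G_1(y_i) = \alpha_i$, and then select $z_i^{(n)} \in D$ with $z_i^{(n)} \downarrow y_i$ as $n \to \infty$ and $z_1^{(n)} < z_2^{(n)} < z_3^{(n)}$. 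Right-continuity of $G_1, G_2$ (together with the conventions $G_j(\pm\infty) \in \{0,1\}$, which cover $y_1 = -\infty$ or $y_3 = +\infty$) passes the slope inequality at these $z^{(n)}$-triples to the limit, giving the desired convexity at $\alpha_1, \alpha_2, \alpha_3$.

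The main obstacle is the bookkeeping around jumps of $G_1$: making sure every element of $G_1(\Rbar)$ is a limit of $G_1(D)$, and that the identity $H(G_1(y)) = G_2(y)$ is valid in the presence of flat pieces of $G_1$. Both issues are resolved precisely by the hypothesis $Q_2 \ll Q_1$, which is the only place that assumption plays a role in the argument.
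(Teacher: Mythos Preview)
Your argument is correct and shares the paper's core idea: translate condition~(iv) of Theorem~\ref{Thm:EquivDef} into the chord-slope inequality for $H$, using $Q_2 \ll Q_1$ to ensure $H(G_1(y)) = G_2(y)$ on every level set of $G_1$. The main tactical difference is that the paper avoids your approximation step entirely. For the direction ($\Rightarrow$) it sets $y_i := G_1^{-1}(\alpha_i)$ directly and invokes Lemma~\ref{Lem:Quant} to obtain $y_1 < y_2 < y_3$ with $G_1(y_i) = \alpha_i$, so condition~(iii) applies immediately to $A = (y_1,y_2]$ and $B = (y_2,y_3]$; for ($\Leftarrow$) it works with arbitrary $x < y < z$ and, when the $Q_1$-masses are positive, replaces each $w$ by $\tilde w := G_1^{-1}(G_1(w))$ so that the relevant abscissae $r,s,t$ lie in $G_1(\Rbar)$ by construction. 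Your route through the dense set $D$ of $Q_1$-continuity points and the right-limit passage is valid (and handles the endpoint cases $y_1 = -\infty$, $y_3 = +\infty$ as you indicate), but the paper's use of the quantile function makes the limiting argument unnecessary and sidesteps the bookkeeping you flag as the ``main obstacle''.
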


\begin{Remark}
The assumption that $Q_2 \ll Q_1$ is essential in Corollary~\ref{Cor:Equiv_LR_ODC}. For instance, if $Q_1=(\delta_0+\delta_1)/2$ with $\delta_y$ denoting Dirac measure at $y$ and $Q_2=\mathrm{Unif}(0,1)$, then $H_{G_1,G_2}(0)=H_{G_1,G_2}(1/2)=0$ and $H_{G_1,G_2}(1)=1$. In consequence, the ordinal dominance curve restricted to $G_1(\Rbar)=\{0,1/2,1\}$ is convex, but $Q_1$ is not smaller than $Q_2$ with respect to likelihood ratio order since $\mathrm{ROC}(Q_1,Q_2)=\{(0,0),(1,1)\}\cup\{(1/2,u):0\le u\le 1\}$ is not concave.
\end{Remark}

\begin{Example}
Let $Q_1 = \mathcal{N}(1,1)$ and $Q_2 = \mathcal{N}(3,6)$. Here $\log dQ_2/dQ_1(y)$ is strictly convex in $y \in \R$ with limit $\infty$ as $|y| \to \infty$, whence neither $Q_1 \lelr Q_2$ nor $Q_2 \lelr Q_1$. The left panel of Figure~\ref{fig:ROC_ODC} shows the corresponding ROC curve $\mathrm{ROC}(Q_1,Q_2)$ which is obviously not concave. Now let us replace these Gaussian distributions with gamma distributions, $Q_1 = \mathrm{Gamma}(\alpha_1,\beta_1)$ and $Q_2 = \mathrm{Gamma}(\alpha_2,\beta_2)$, where $\mathrm{Gamma}(\alpha,\beta)$ denotes the gamma distribution with shape parameter $\alpha > 0$ and scale parameter $\beta > 0$. One can easily show that $\log dQ_2/dQ_1(y)$ is strictly increasing in $y > 0$ if $\alpha_1 \le \alpha_2$, $\beta_1 \le \beta_2$ and $(\alpha_1,\beta_1) \ne (\alpha_2,\beta_2)$. The right panel of Figure~\ref{fig:ROC_ODC} shows the concave ROC curve for $(\alpha_1,\beta_1) = (1,1)$ and $(\alpha_2,\beta_2) = (1.5,2)$, i.e.\ the first and second moments of $Q_1$ and $Q_2$ coincide with those of the Gaussian distributions considered before.
\end{Example}

\begin{figure}
\includegraphics[width=0.49\textwidth]{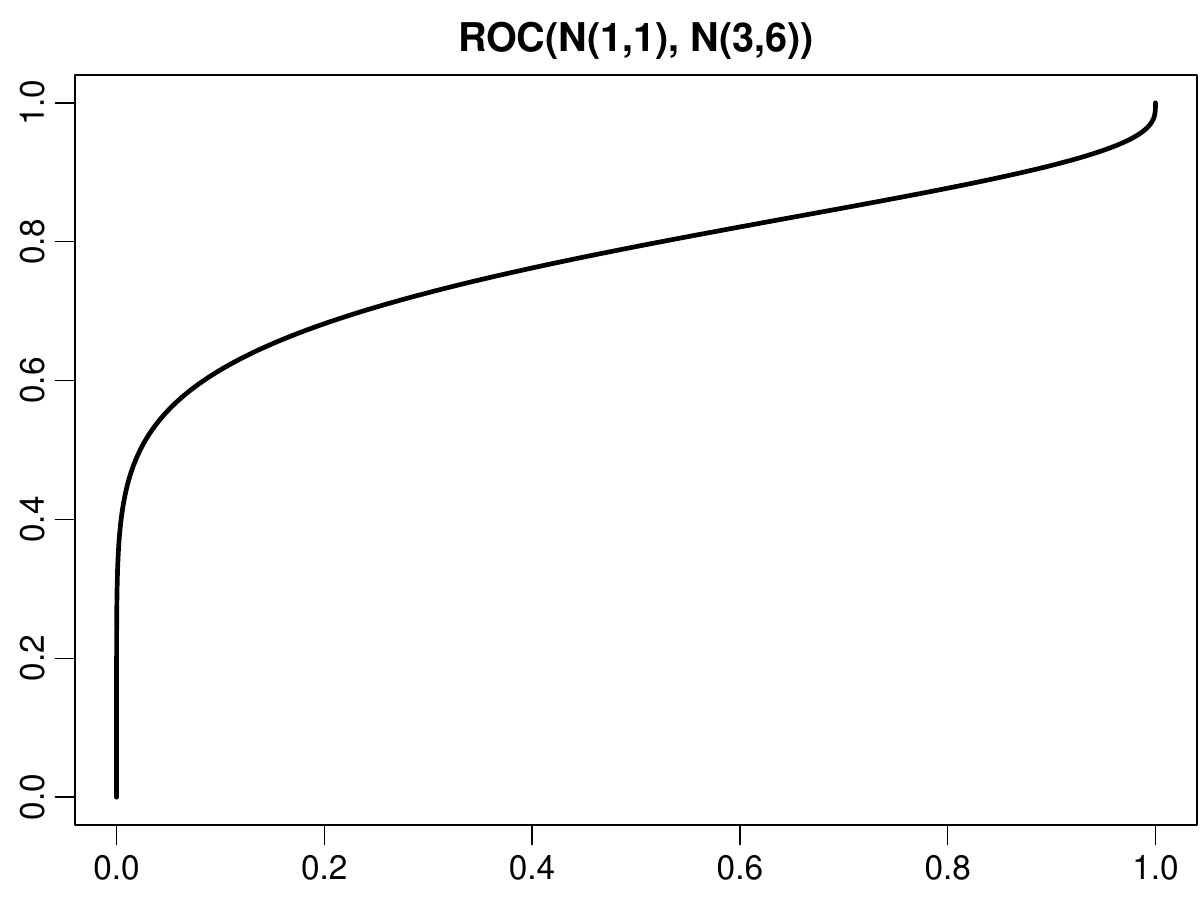}
\hfill
\includegraphics[width=0.49\textwidth]{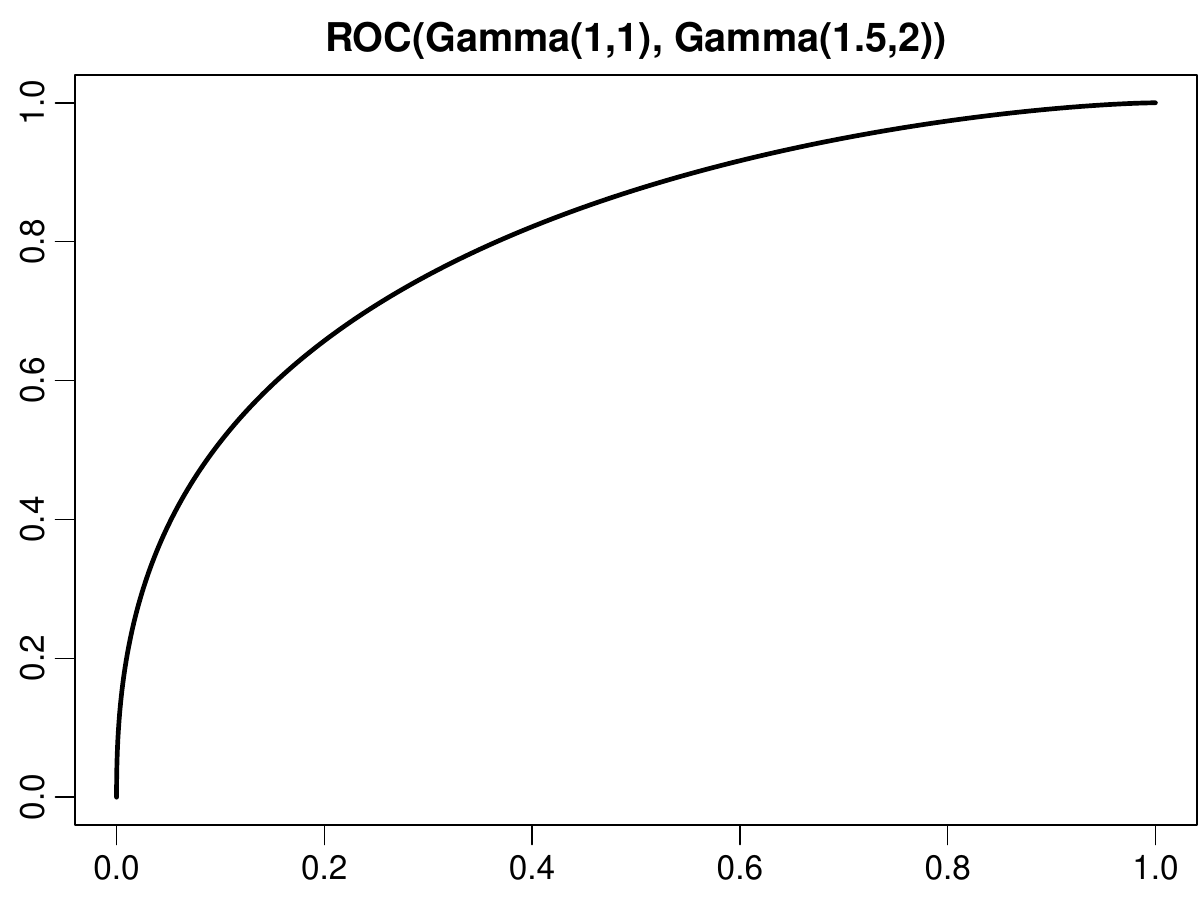}
\caption{ROC curves for two pairs $(Q_1,Q_2)$.}
\label{fig:ROC_ODC}
\end{figure}

\section{Bivariate distributions}
\label{Sec:Bivariate}

Throughout this section we consider a pair $(X,Y)$ of real-valued random variables with distribution $R = \LL(X,Y)$ and marginal distributions $P = \LL(X)$ and $Q = \LL(Y)$. We investigate order constraints on the conditional distributions of $Y$, given that $X$ lies in certain regions. The \textsl{range} of the random variable $X$ is defined as the set
\[
	\XX \ := \ \bigl\{ x \in \R :
		\Pr(X \le x), \Pr(X \ge x) > 0 \bigr\} .
\]
One can easily verify that $\XX$ is the smallest real interval such that $P(\XX) = \Pr(X \in \XX) = 1$.

\subsection{Conditional distributions and order constraints}
\label{Subsec:Conditional.order}

It is well-known from measure theory that the conditional distribution of $Y$, given $X$, may be described by a stochastic kernel $K : \R \times \BB \to [0,1]$. That is,
\begin{itemize}
\item for any fixed $x \in \R$, $K(x,\cdot)$ is a probability measure on $\BB$;
\item for any fixed $B \in \BB$, $K(\cdot,B)$ is measurable on $\R$;
\item for arbitrary $A,B \in \BB$,
\[
	R(A \times B) \ = \ \int_A K(x,B) \, P(\d x) .
\]
\end{itemize}
The following two theorems clarify under which conditions on the distribution $R$ of $(X,Y)$, the conditional distribution $K(x,\cdot)$ is isotonic in $x \in \XX$ with respect to stochastic order or likelihood ratio order. Interestingly, the proof is constructive, showing that
\[
	K(x, (y,\infty)) \ = \ \sup_{a < x\colon P((a,x]) > 0} \Pr(Y > y \,|\, a < X \le x)
\]
for $x \in \XX$ and $y \in \R$.

\begin{Theorem}[Stochastic order]
\label{Thm:Kernel.st}
The following three conditions are equivalent:
\begin{description}
\item[(i)] For arbitrary sets $A_1, A_2 \in \BB$ with $A_1 < A_2$ and real numbers $y$,
\begin{equation}
\label{ineq:kernel.st}
	R(A_1 \times (y,\infty)) P(A_2) \
	\le \ P(A_1) R(A_2 \times (y,\infty)) .
\end{equation}
\item[(ii)] There exists a dense subset $D$ of $\R$ such that for arbitrary numbers $x_0 < x_1 < x_2$ and $y$ in $D$, \eqref{ineq:kernel.st} holds true with $A_1 = (x_0,x_1]$ and $A_2 = (x_1,x_2]$.
\item[(iii)] The stochastic kernel $K$ may be constructed such that for arbitrary $x_1, x_2 \in \XX$ with $x_1 < x_2$,
\[
	K(x_1,\cdot) \ \lest \ K(x_2,\cdot) .
\]
\end{description}
\end{Theorem}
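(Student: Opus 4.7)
My plan is to prove the cycle $(i)\Rightarrow(ii)\Rightarrow(iii)\Rightarrow(i)$, with the bulk of the work going into the middle implication.

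The implication $(i)\Rightarrow(ii)$ is immediate with $D=\R$. For $(iii)\Rightarrow(i)$, fix $A_1<A_2$ in $\BB$ and $y\in\R$; if $P(A_i)=0$ for some $i$ both sides of \eqref{ineq:kernel.st} vanish, so assume both $P(A_i)$ are positive. Every $x_1\in A_1\cap\XX$ lies strictly below every $x_2\in A_2\cap\XX$, so $K(x_1,(y,\infty))\le K(x_2,(y,\infty))$ by (iii). Writing $u:=\sup_{A_1\cap\XX}K(\cdot,(y,\infty))$ and $v:=\inf_{A_2\cap\XX}K(\cdot,(y,\infty))$, one has $u\le v$, and since $K$ is a version of the conditional distribution and $P(\XX)=1$,
\[
    \frac{R(A_1\times(y,\infty))}{P(A_1)} \ \le \ u \ \le \ v \ \le \ \frac{R(A_2\times(y,\infty))}{P(A_2)} ;
\]
rearrangement yields \eqref{ineq:kernel.st}.

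For $(ii)\Rightarrow(iii)$, I first upgrade (ii) to its version for arbitrary (not necessarily adjacent) pairs of half-open intervals and arbitrary $y$. Refining $D$ to contain only continuity points of $P$ and $Q$, the inequality extends to all $y\in\R$ via monotone convergence on $(y_n,\infty)\uparrow(y,\infty)$ for $y_n\downarrow y$ in $D$; to arbitrary $x_0<x_1<x_2\in\R$ via $x_i^{(n)}\in D$ with $(x_0^{(n)},x_1^{(n)}]\downarrow(x_0,x_1]$ and $(x_1^{(n)},x_2^{(n)}]\downarrow(x_1,x_2]$; and from adjacent to general $I_1<I_2$ via the elementary implication $p/q\le p'/q'\Longrightarrow p/q\le(p+p')/(q+q')\le p'/q'$ applied with the gap inserted as a middle interval. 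Following the hint, I then set for $x\in\XX$ and $y\in\R$
\[
    K(x,(y,\infty)) \ := \ \sup_{a<x,\,P((a,x])>0}\frac{R((a,x]\times(y,\infty))}{P((a,x])} ,
\]
and define $K(x,\cdot)$ arbitrarily for $x\notin\XX$. The same implication together with the extended (ii) shows that the ratio above is non-decreasing in both $a$ (for fixed $x,y$) and $x$ (for fixed $a,y$), so $x\mapsto K(x,(y,\infty))$ is non-decreasing on $\XX$; this gives at once the ordering demanded by (iii) and Borel measurability of $K(\cdot,(y,\infty))$.

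It remains to check that each $K(x,\cdot)$ is a probability measure and that $K$ represents the conditional distribution. The map $y\mapsto K(x,(y,\infty))$ is non-increasing and right-continuous by swapping sup with monotone limits on $(y,\infty)$; it tends to $1$ as $y\to-\infty$ since the ratio tends to $1$ for each fixed admissible $a$, and it tends to $0$ as $y\to\infty$ by two cases: if $P(\{x\})>0$ then $P((a,x])\ge P(\{x\})$ gives uniform control, whereas if $P(\{x\})=0$ then $x\in\XX$ forces some $b>x$ with $P((x,b])>0$, and the extended (ii) applied to $(a,x]$ and $(x,b]$ bounds the ratio uniformly by $R((x,b]\times(y,\infty))/P((x,b])$, which is independent of $a$ and tends to $0$. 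The main obstacle I foresee is the consistency $R(A\times(y,\infty))=\int_A K(x,(y,\infty))\,P(\d x)$; by a monotone class argument and right-continuity in $y$ it suffices to prove it for $A=(a,b]$. The measure $\mu_y(A):=R(A\times(y,\infty))$ satisfies $\mu_y\ll P$ with some density $h_y$, and I plan to identify $K(\cdot,(y,\infty))$ with $h_y$ $P$-almost everywhere by a Lebesgue differentiation argument for the pair of increasing functions $x\mapsto R((-\infty,x]\times(y,\infty))$ and $x\mapsto P((-\infty,x])$ on $\R$; the one-sided limit in the definition of $K$ is reconciled with the usual two-sided Besicovitch derivative precisely because of the monotonicity in $a$ established earlier, yielding $K(x,(y,\infty))=h_y(x)$ for $P$-a.e.\ $x$ and completing the proof.
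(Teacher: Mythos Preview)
Your overall strategy coincides with the paper's: the same cycle of implications, the same explicit construction $K(x,(y,\infty))=\sup_{a<x}\Pr(Y>y\,|\,a<X\le x)$, and essentially the same verifications of monotonicity, right-continuity in $y$, and the limits at $y\to\pm\infty$. The one substantive difference is how you establish that this $K$ really is a version of the conditional distribution.

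You propose to invoke a Lebesgue--Besicovitch differentiation theorem for the pair of increasing functions $x\mapsto R((-\infty,x]\times(y,\infty))$ and $x\mapsto P((-\infty,x])$, and then reconcile the resulting two-sided derivative with your one-sided supremum. This can be made to work, but the reconciliation step is not automatic: monotonicity in $a$ only tells you that the left-hand ratio is increasing, not that its limit agrees $P$-a.e.\ with the symmetric Besicovitch derivative. The paper avoids this detour entirely. It isolates the density property in a separate lemma (Lemma~\ref{Lem:IRN}) which proceeds by an elementary sandwich argument: the very monotonicity you already established yields, for every interval $(a,b]$,
\[
	K(a,(y,\infty)) \, P((a,b]) \ \le \ R\bigl((a,b]\times(y,\infty)\bigr) \ \le \ K(b,(y,\infty)) \, P((a,b]) ,
\]
and then one partitions $(a,b]$ into the finitely many level sets $\{(j-1)/k < K(\cdot,(y,\infty)) \le j/k\}$, applies the sandwich on each piece, and lets $k\to\infty$ to obtain $\int_{(a,b]} K(x,(y,\infty))\,P(\d x) = R((a,b]\times(y,\infty))$ exactly. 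This is both shorter and self-contained; it needs no differentiation theorem and no a.e.\ identification with an abstract Radon--Nikodym density. Since you already have the two-sided sandwich in hand (it is immediate from your extended~(ii)), I would recommend replacing the differentiation argument by this direct computation.

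Two minor points: your extension from adjacent to separated intervals $I_1<I_2$ via the mediant needs a one-line check when the gap has $P$-measure zero (then the mediant chain degenerates; use instead the adjacent pair $I_1$ and $I_1$'s complement up to $\sup I_2$). And in the case $P(\{x\})>0$ for the limit $y\to\infty$, the bound $P((a,x])\ge P(\{x\})$ alone is not enough; you also need $R((a,x]\times(y,\infty))\le Q((y,\infty))$ to get a uniform-in-$a$ upper bound that tends to zero. The paper's route here is again via Lemma~\ref{Lem:IRN}, which shows directly that $K(x,(y,\infty))=\Pr(Y>y\,|\,X=x)$ whenever $P(\{x\})>0$.
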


\begin{Remark}
Note that $P(A_j) = R(A_j \times (-\infty,y]) + R(A_j \times (y,\infty))$, so inequality~\eqref{ineq:kernel.st} is equivalent to
\begin{equation}
\label{ineq:kernel.st'}
	R(A_1 \times (y,\infty)) R(A_2 \times (-\infty,y]) \
	\le \ R(A_1 \times (-\infty,y]) R(A_2 \times (y,\infty)) .
\end{equation}
This underlines the connection to the next theorem about likelihood ratio order.
\end{Remark}

\begin{Theorem}[Likelihood ratio order]
\label{Thm:Kernel.lr}
The following three conditions are equivalent:
\begin{description}
\item[(i)] For arbitrary Borel sets $A_1, A_2, B_1, B_2$ with $A_1 < A_2$ and $B_1 < B_2$,
\begin{equation}
\label{ineq:Kernel.lr}
	R(A_1\times B_2) R(A_2\times B_1) \
	\le \ R(A_1\times B_1) R(A_2\times B_2) .
\end{equation}
\item[(ii)] There exists a dense subset $D$ of $\R$ such that for arbitrary numbers $x_0 < x_1 < x_2$ and $y_0 < y_1 < y_2$ in $D$, \eqref{ineq:Kernel.lr} holds true with $A_1 = (x_0,x_1]$, $A_2 = (x_1,x_2]$ and $B_1 = (y_0,y_1]$, $B_2 = (y_1,y_2]$.
\item[(iii)] The stochastic kernel $K$ may be constructed such that for arbitrary $x_1, x_2 \in \XX$ with $x_1 < x_2$,
\[
	K(x_1,\cdot) \ \lelr \ K(x_2,\cdot) .
\]
\end{description}
\end{Theorem}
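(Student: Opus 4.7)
The proof follows the template of Theorem~\ref{Thm:Kernel.st}. For (iii)$\Rightarrow$(i), suppose such a kernel $K$ exists and fix $A_1 < A_2$, $B_1 < B_2$ in $\BB$. Since $P$ is concentrated on $\XX$, for $(P\otimes P)$-almost every $(x_1,x_2) \in A_1 \times A_2$ one has $x_1 < x_2$ in $\XX$, so Theorem~\ref{Thm:EquivDef}(iii) applied to the LR-ordered pair $K(x_1,\cdot) \lelr K(x_2,\cdot)$ at the sets $B_1 < B_2$ gives
\[
    K(x_1,B_2)K(x_2,B_1) \ \le \ K(x_1,B_1)K(x_2,B_2) .
\]
Integrating this pointwise inequality against $P(\d x_1)\otimes P(\d x_2)$ over $A_1 \times A_2$ and using $R(A \times B) = \int_A K(x,B)\,P(\d x)$ together with Fubini yields~\eqref{ineq:Kernel.lr}. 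Clearly (i)$\Rightarrow$(ii), and (ii)$\Rightarrow$(i) follows from a standard monotone-class argument: one first lets the leftmost endpoints in the intervals of~(ii) tend to $-\infty$ through $D$ to obtain the inequality for half-lines, then applies the $\pi$--$\lambda$ theorem in each of the four coordinates to extend it to arbitrary Borel sets.

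The main implication is (ii)$\Rightarrow$(iii). Condition~(ii) entails condition~(ii) of Theorem~\ref{Thm:Kernel.st} by specialising $B_1 = (y_0,y]$, $B_2 = (y,y_2]$ and letting $y_0, y_2 \in D$ tend to $\mp\infty$; hence I may work with the very same kernel $K(x,(y,\infty)) = \sup_{a < x\colon P((a,x]) > 0} \Pr(Y > y \mid a < X \le x)$ constructed there, which already satisfies $K(x_1,\cdot) \lest K(x_2,\cdot)$ for $x_1 < x_2$ in $\XX$. To upgrade $\lest$ to $\lelr$, fix $x_1 < x_2$ in $\XX$ and choose $a_1 < x_1 \le a_2 < x_2$ with $P((a_j,x_j]) > 0$. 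Applying~(i), now available, with $A_j = (a_j,x_j]$ and arbitrary Borel $B_1 < B_2$, and dividing by $P(A_1)P(A_2)$, gives $Q(\cdot \mid (a_1,x_1]) \lelr Q(\cdot \mid (a_2,x_2])$ via Theorem~\ref{Thm:EquivDef}(iii). A short mixture lemma---if $Q' \lelr Q''$ and $\alpha \in [0,1]$, then $\alpha Q' + (1-\alpha)Q'' \lelr Q''$, verified by observing that $g''/(\alpha g' + (1-\alpha)g'')$ is an isotonic function of the antitonic quantity $g'/g''$---then shows that for each fixed $x_j$ the family $\{Q(\cdot \mid (a,x_j])\}_a$ is isotonic in $\lelr$, hence also in $\lest$ by Remark~\ref{Rem:ST_LR}, as $a \uparrow x_j$. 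Monotone convergence of the survival functions therefore yields $Q(\cdot \mid (a,x_j]) \to K(x_j,\cdot)$ weakly, and Remark~\ref{Rem:Weak.convergence1} preserves $\lelr$ in the limit, producing $K(x_1,\cdot) \lelr K(x_2,\cdot)$. The main obstacle is guaranteeing, for every $x_1 < x_2$ in $\XX$, the existence of admissible sequences $a_1 \uparrow x_1$, $a_2 \uparrow x_2$ with $a_2 \ge x_1$ and positive $P$-mass in each interval; this requires a short case analysis according to whether $x_j$ is an atom of $P$ or an accumulation point.
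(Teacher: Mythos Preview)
Your argument for (iii)$\Rightarrow$(i) and (i)$\Rightarrow$(ii) matches the paper's. For (ii)$\Rightarrow$(iii) you also reduce to the kernel of Theorem~\ref{Thm:Kernel.st}, but then take a different route: you show $\LL(Y\mid a_1<X\le x_1)\lelr\LL(Y\mid a_2<X\le x_2)$ for suitable $a_j$ and pass to the weak limits $K(x_j,\cdot)$ via Remark~\ref{Rem:Weak.convergence1}, whereas the paper writes the ``determinant'' $K(x_1,(y_0,y_1])K(x_2,(y_1,y_2])-K(x_1,(y_1,y_2])K(x_2,(y_0,y_1])$ directly as a limit of conditional-probability differences and checks nonnegativity using condition~(ii). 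Your weak-convergence route is clean and conceptually appealing; the paper's is more hands-on but avoids the auxiliary mixture lemma. Note that your mixture step is not strictly needed: Lemma~\ref{Lem:IRN} already gives $K(x_j,(y,\infty))=\lim_{a\to x_j'-}\Pr(Y>y\mid a<X\le x_j)$, which is weak convergence outright.

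There is one genuine gap. Your claim that (ii)$\Rightarrow$(i) follows from a ``standard monotone-class argument'' via the $\pi$--$\lambda$ theorem is not justified: inequality~\eqref{ineq:Kernel.lr} is bilinear in $R$, and the class of sets $A_2$ (say) satisfying it for fixed $A_1,B_1,B_2$ is not closed under differences, because subtracting two inequalities in the same direction yields nothing. Fortunately you never need the full strength of~(i) in your proof of (ii)$\Rightarrow$(iii): the sets $A_j=(a_j,x_j]$ are intervals, and to conclude $\LL(Y\mid A_1)\lelr\LL(Y\mid A_2)$ you only need the inequality for interval $B_1<B_2$ as well, by Theorem~\ref{Thm:EquivDef}(iv). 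That is precisely condition~(ii) with $D=\R$, obtained by approximation. So drop the monotone-class paragraph, invoke Theorem~\ref{Thm:EquivDef}(iv) instead of (iii), and let (i) follow at the end via (iii)$\Rightarrow$(i). The case analysis you flag at the end is handled exactly as in the paper: if $P((x_1,x_2])=0$ then $K(x_1,\cdot)=K(x_2,\cdot)$ by the construction in Lemma~\ref{Lem:IRN}; otherwise the points $x_j':=\min(\{x_j\}\cup\{x<x_j:P((x,x_j])=0\})$ satisfy $x_1'\le x_1<x_2'\le x_2$, so eventually $a_2\in(x_1,x_2')$ and the intervals $(a_1,x_1]<(a_2,x_2]$ both carry positive $P$-mass.
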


\begin{Remark}
Condition~(i) in Theorems~\ref{Thm:Kernel.st} and \ref{Thm:Kernel.lr} is equivalent to the following statement about conditional distributions: Let $A_1, A_2$ be arbitrary Borel sets such that $A_1 < A_2$ and $P(A_1)$, $P(A_2)$ are strictly positive. Then,
\begin{align*}
	\LL(Y \,|\, X \in A_1) \
	&\lest \ \LL(Y \,|\, X \in A_2)
		\quad \text{(Theorem~\ref{Thm:Kernel.st})} , \\
	\LL(Y \,|\, X \in A_1) \
	&\lelr \ \LL(Y \,|\, X \in A_2)
		\quad \text{(Theorem~\ref{Thm:Kernel.lr})} .
\end{align*}
Condition~(ii) in Theorems~\ref{Thm:Kernel.st} and \ref{Thm:Kernel.lr} may be re-interpreted analogously.
\end{Remark}

\subsection{Total positivity of order two}
\label{Subsec:TP2}

Theorem~\ref{Thm:Kernel.lr} hints at a connection between likelihood ratio ordering and total positivity. A good starting point for the latter concept is the monograph of \cite{Karlin_1968}. Recall that a function $h : \R\times \R \to \R$ is called \textsl{totally positive of order two (TP2)} if for arbitrary real numbers $x_1 < x_2$ and $y_1 < y_2$,
\begin{equation}
\label{ineq:TP2.h}
	h(x_2,y_1) h(x_1,y_2) \ \le \ h(x_1,y_1) h(x_2,y_2) .
\end{equation}
Thus, conditions~(i) and (ii) of Theorem~\ref{Thm:Kernel.lr} may be interpreted as a distributional notion of total positivity of order two.

\begin{Definition}[Total positivity of order two of bivariate distributions]
A probability distribution $R$ on $\R \times \R$ is called \textsl{totally positive of order two (TP2)} if for arbitrary Borel sets $A_1 < A_2$ and $B_1 < B_2$,
\[
	R(A_2 \times B_1) R(A_1 \times B_2) \ \le \ R(A_1 \times B_1) R(A_2 \times B_2) .
\]
\end{Definition}

\begin{Remark}[TP2 of distributions and densities]
Suppose that the distribution of $(X,Y)$ has a density $h$ with respect to a product measure $\mu \otimes \nu$ on $\R \times \R$, where $\mu$ and $\nu$ are $\sigma$-finite measures on $\R$. If the function $h$ is TP2, then the distribution of $(X,Y)$ is TP2. To verify this, one has to express the probabilities in conditions~(i) or (ii) of Theorem~\ref{Thm:Kernel.lr} as integrals of $h$ with respect to $\mu \otimes \nu$.
\end{Remark}

When looking at the previous remark, one might think at first glance that a reverse statement is true with, say, $\mu = P$ and $\nu = Q$. But this is false. For instance, let $X$ be uniformly distributed on $[0,1]$, and let the conditional distribution of $Y$ given $X$ be given by the kernel $K$ with
\[
	K(x,\cdot) \ := \ \begin{cases}
		\mathrm{Unif}(0,1/3) & \text{if} \ x \le 1/3 , \\
		\delta_x & \text{if} \ 1/3 < x < 2/3 , \\
		\mathrm{Unif}(2/3,1) & \text{if} \ x \ge 2/3 .
	\end{cases}
\]
Then $Y$ follows the uniform distribution on $[0,1]$ too, and condition~(iii) of Theorem~\ref{Thm:Kernel.lr} is clearly satisfied, but the distribution of $(X,Y)$ has no density with respect to any product measure $\mu \otimes \nu$. Theorem~\ref{Thm:TP2} below provides a general statement, but let us start with a special case with a clean, positive answer which follows immediately from Theorem~\ref{Thm:Kernel.lr}: 

\begin{Corollary}
\label{Cor:DiscreteTP2}
Suppose that $(X,Y)$ has a discrete distribution with probability mass function $h$, i.e.\ $h(x,y) = \Pr(X = x, Y = y)$. Then the following two conditions are equivalent:
\begin{description}
\item[(i)] For arbitrary real numbers $x_1 < x_2$ with $\Pr(X = x_1), \Pr(X = x_2) > 0$,
\[
	K(x_1,\cdot) \ \lelr \ K(x_2,\cdot) .
\]
\item[(ii)] The function $h$ is TP2.
\end{description}
\end{Corollary}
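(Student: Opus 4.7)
The plan is to deduce both implications from Theorem~\ref{Thm:Kernel.lr}, invoking Theorem~\ref{Thm:EquivDef} once for the harder direction. Since $R$ is discrete, every probability $R(A \times B)$ is a sum of values of $h$ over countably many atoms, so the whole argument reduces to bookkeeping with finite/countable sums.

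For the implication (ii) $\Rightarrow$ (i), I would verify condition~(i) of Theorem~\ref{Thm:Kernel.lr} and then read off the conclusion from its equivalence with condition~(iii). Given Borel sets $A_1 < A_2$ and $B_1 < B_2$, both products $R(A_1 \times B_1) R(A_2 \times B_2)$ and $R(A_1 \times B_2) R(A_2 \times B_1)$ expand as sums over the same index set of quadruples $(x_1, x_2, y_1, y_2) \in A_1 \times A_2 \times B_1 \times B_2$, so their difference is
\[
R(A_1 \times B_1) R(A_2 \times B_2) - R(A_1 \times B_2) R(A_2 \times B_1) \ = \ \sum \bigl[ h(x_1, y_1) h(x_2, y_2) - h(x_1, y_2) h(x_2, y_1) \bigr] .
\]
Each summand is nonnegative by the TP2 property~\eqref{ineq:TP2.h} of $h$, since the conditions $A_1 < A_2$ and $B_1 < B_2$ force $x_1 < x_2$ and $y_1 < y_2$. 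This yields~\eqref{ineq:Kernel.lr}, and Theorem~\ref{Thm:Kernel.lr} then delivers a kernel with $K(x_1,\cdot) \lelr K(x_2,\cdot)$ for all $x_1 < x_2$ in $\XX$; in particular, this applies to any two atoms of $P$.

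For the converse (i) $\Rightarrow$ (ii), fix $x_1 < x_2$ and $y_1 < y_2$. If $\Pr(X = x_1) = 0$ or $\Pr(X = x_2) = 0$, both sides of the TP2 inequality~\eqref{ineq:TP2.h} for $h$ vanish and there is nothing to show. Otherwise assumption~(i) gives $K(x_1,\cdot) \lelr K(x_2,\cdot)$, where $K(x_i,\{y\}) = h(x_i, y)/\Pr(X = x_i)$. Applying condition~(iii) of Theorem~\ref{Thm:EquivDef} to the singletons $A = \{y_1\}$, $B = \{y_2\}$ (which do satisfy $A < B$) produces
\[
K(x_1, \{y_2\}) K(x_2, \{y_1\}) \ \le \ K(x_1, \{y_1\}) K(x_2, \{y_2\}) ,
\]
and multiplying by $\Pr(X = x_1) \Pr(X = x_2)$ recovers the TP2 inequality for $h$ at $(x_1, x_2, y_1, y_2)$.

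There is essentially no obstacle here, as the paper itself hints. The only care needed is to handle the degenerate case where one of the $x_i$ carries no mass separately in the backward direction, and to note that~\eqref{ineq:TP2.h} holds trivially whenever two of the four arguments coincide, so verifying it for strictly ordered $x_1 < x_2$ and $y_1 < y_2$ is enough.
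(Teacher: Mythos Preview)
Your proposal is correct and matches the paper's intended argument: the paper itself offers no separate proof, merely noting that the corollary ``follows immediately from Theorem~\ref{Thm:Kernel.lr}'' and, in the paragraph preceding it, that the implication from TP2 of $h$ to TP2 of the distribution is obtained by expressing the probabilities in condition~(i) of Theorem~\ref{Thm:Kernel.lr} as integrals (here, sums) of $h$. Your write-up fills in exactly these details, and the backward direction via Theorem~\ref{Thm:EquivDef}~(iii) applied to singletons is the natural complement.
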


To formulate our general result about total positivity and likelihood ratio order, we introduce a northwest and a southeast boundary for each $x \in \R$:
\begin{align*}
	\Snw(x) \
	:= \ &\min \bigl\{ y \in \Rbar : \Pr(X \le x, Y > y) = 0 \bigr\} , \\
	\Sse(x) \
	:= \ &\max \bigl\{ y \in \Rbar :  \Pr(X \ge x, Y < y) = 0 \bigr\} .
\end{align*}
One can easily verify that this defines two isotonic functions $\Snw, \Sse : \R \to \Rbar$. Moreover, on $\XX$, $\Snw > -\infty$ and $\Sse < \infty$.

\begin{Theorem}
\label{Thm:TP2}
Suppose that the distribution of $(X,Y)$ is TP2. Set
\[
	\XX_o \ := \ \bigl\{ x \in \XX : \Snw(x) \le \Sse(x) \bigr\} .
\]
Then the kernel $K$ can be constructed such that it satisfies condition~(iii) in Theorem~\ref{Thm:Kernel.lr} and has the following properties:
\begin{description}
\item[(a)] If $x \in \XX_o$, then $K(x,\cdot) = \delta_{S(x)}$, where $S : \XX_o \to \R$ is an arbitrary isotonic function such that $\Snw(x) \le S(x) \le \Sse(x)$ for all $x \in \XX_o$.
\item[(b)] If $x \in \XX \setminus \XX_o$, then $K(x,\cdot)$ has a bounded density $h(x,\cdot)$ with respect to $Q$ such that $h(x,y) = 0$ if $y < \Sse(x)$ or $y > \Snw(x)$. Moreover, $h$ is TP2 on the set $(\XX \setminus \XX_o) \times \R$, that means, \eqref{ineq:TP2.h} holds true for arbitrary numbers $x_1 < x_2$ in $\XX \setminus \XX_o$ and real numbers $y_1 < y_2$.
\end{description}
\end{Theorem}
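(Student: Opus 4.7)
The plan is to take the LR-monotone kernel $K$ supplied by Theorem~\ref{Thm:Kernel.lr} --- which applies since $R$ being TP2 is exactly its condition~(i) --- and refine it so that the Dirac/density dichotomy of parts~(a) and~(b) emerges from the geometry of $\Snw$ and $\Sse$. The canonical representative is the one from the construction appearing before Theorem~\ref{Thm:Kernel.st}, namely $K(x,(y,\infty))=\sup_{a<x,\,P((a,x])>0}\Pr(Y>y\mid a<X\le x)$. A symmetric right-sided construction produces an alternative LR-monotone kernel $\widetilde K$ with $\widetilde K(x,(-\infty,y))=\sup_{b>x,\,P([x,b))>0}\Pr(Y<y\mid x\le X<b)$. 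Both satisfy $\int K(x,\cdot)\,P(\d x)=Q$ and agree on the $P$-conull set where left and right conditional limits of $\LL(Y\mid X)$ coincide, so modifications on the remaining $P$-null set are harmless.

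First I would read off support bounds. The defining identity $\Pr(X\le x,Y>\Snw(x))=0$ feeds directly into the left-sup formula to give $K(x,(\Snw(x),\infty))=0$, so $\supp K(x,\cdot)\subseteq(-\infty,\Snw(x)]$, and symmetrically $\supp\widetilde K(x,\cdot)\subseteq[\Sse(x),\infty)$. Whenever $\Pr(X=x)>0$ the conditional $\LL(Y\mid X=x)$ is forced to sit in $[\Sse(x),\Snw(x)]$, which implies $\Sse(x)\le\Snw(x)$; for such an atom with $x\in\XX_o$ this forces $\Sse(x)=\Snw(x)$ and $K(x,\cdot)=\widetilde K(x,\cdot)=\delta_{\Snw(x)}$, so part~(a) is automatic at atoms of $P$. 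On the atom-free part of $\XX_o$ each $K(x,\cdot)$ is determined only up to a $P$-null set, so I can redefine $K(x,\cdot):=\delta_{S(x)}$ for any measurable isotonic $S$ with $\Snw(x)\le S(x)\le\Sse(x)$; this preserves measurability of $K(\cdot,B)$ and the kernel identity, while LR monotonicity survives in all pair comparisons via the support inclusions $\supp K(x_1,\cdot)\subseteq(-\infty,\Snw(x_1)]\subseteq(-\infty,S(x_2)]$ (for $x_1<x_2$ with $x_2\in\XX_o$) and the symmetric inclusion on the other side; for both endpoints in $\XX_o$ it reduces to $\delta_{S(x_1)}\lelr\delta_{S(x_2)}$, which is just isotonicity of $S$.

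On $\XX\setminus\XX_o$ one has $\Snw(x)>\Sse(x)$, and I would exhibit the density $h(x,\cdot)=\d K(x,\cdot)/\d Q$ as follows. Pick a countable dense $D\subseteq\XX\setminus\XX_o$. Theorem~\ref{Thm:EquivDef}(ii) applied pairwise on $D$ gives densities of $K(x_j,\cdot),K(x_k,\cdot)$ against a common dominating measure with TP2 ratio; patching these against $\mu:=\sum_n2^{-n}K(x_n,\cdot)$ yields a measurable $\tilde h:D\times\R\to[0,\infty)$ satisfying~\eqref{ineq:TP2.h} on $D\times\R$, and rescaling via $Q=\int K(x,\cdot)\,P(\d x)$ converts $\tilde h$ into $h(x_n,y)=\d K(x_n,\cdot)/\d Q(y)$, which inherits the TP2 inequality. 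Pointwise boundedness of each slice $h(x,\cdot)$ follows from the monotone likelihood ratio $h(x,\cdot)/h(x',\cdot)$ being bounded on the common support $[\Sse(x),\Snw(x')]$ for $x'\in D$ slightly below $x$; that support has positive $Q$-measure because $\Snw(x')>\Sse(x)$ on $\XX\setminus\XX_o$. Finally $h$ is extended from $D\times\R$ to $(\XX\setminus\XX_o)\times\R$ by a monotone one-sided limit in $x$; the support bounds $h(x,\cdot)=0$ outside $[\Sse(x),\Snw(x)]$ and the TP2 inequality both pass to the limit.

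I expect the last step to be the main obstacle, because three demands must be reconciled simultaneously: absolute continuity $K(x,\cdot)\ll Q$ on $\XX\setminus\XX_o$ (which must be extracted from the TP2 geometry, with $\Snw(x)>\Sse(x)$ forcing enough overlap between $K(x,\cdot)$ and neighbouring slices $K(x',\cdot)$ that all of them are dominated by $Q$), joint measurability of a single TP2 representative $h$ on $(\XX\setminus\XX_o)\times\R$ rather than just pairwise LR representations on $D$, and pointwise boundedness of each slice, which could a priori blow up near the boundary of $\XX\setminus\XX_o$. A clean route is to first establish $K(x,\cdot)\ll Q$ on $\XX\setminus\XX_o$ directly from the overlap-of-neighbours argument, and then derive $h$ from a single Radon--Nikodym derivative with its joint measurability and TP2 property inherited from applying the TP2 inequality of $R$ to nested rectangles with shrinking $x$-sides.
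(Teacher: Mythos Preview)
Your outline for part~(a) is close in spirit to the paper's, modulo one imprecision: it is not the atom-free part of $\XX_o$ that is $P$-null, but rather the set $\{x\in\XX_o:\Snw(x)<\Sse(x)\}$ (since $\Kw(x,\cdot)$ and $\Ke(x,\cdot)$, both versions of the conditional distribution, would live on disjoint half-lines there); on $\{\Snw=\Sse\}\cap\XX_o$ the kernel is already forced to be $\delta_{\Snw(x)}$. The paper also first conditions $K(x,\cdot)$ on $[\Sse(x),\Snw(x)]$ for $x\in\XX\setminus\XX_o$ (after showing this interval has positive $K(x,\cdot)$-mass), which you omit but which is needed for the support claim in~(b).

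The genuine gap is in part~(b). You correctly isolate the three obstacles---absolute continuity $K(x,\cdot)\ll Q$, a single TP2 representative $h$, and boundedness of each slice---but your proposed resolution does not work. The boundedness argument is circular: a monotone ratio $h(x,\cdot)/h(x',\cdot)$ on an interval can diverge at an endpoint where $h(x',\cdot)$ vanishes, and even if it were finite you would still need $h(x',\cdot)$ bounded first. The dense-$D$-and-patching route does not explain why $K(x,\cdot)\ll Q$ for $x\notin D$, nor why a monotone limit in $x$ of densities is again a density. The paper's key device, which you are missing, is a sandwich lemma: if $Q_0\lelr Q_*\lelr Q_1$ with $Q_0((y,\infty))+Q_1((-\infty,y))>0$ for every $y$, then $Q_*$ has a \emph{bounded} density with respect to any convex combination $\lambda_0Q_0+\lambda_1Q_1$, given explicitly by $\lim_{w\to y-}Q_*((w,y])\big/(\lambda_0Q_0+\lambda_1Q_1)((w,y])$. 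Taking $Q_0=\LL(Y\mid X\le x)$, $Q_*=K(x,\cdot)$, $Q_1=\LL(Y\mid X>x)$ makes $\lambda_0Q_0+\lambda_1Q_1=Q$, and the overlap hypothesis is supplied by $\Snw(x)>\Sse(x)$. This single lemma delivers absolute continuity, boundedness, and the explicit formula $h(x,y)=\lim_{w\to y-}K(x,(w,y])/Q((w,y])$; TP2 of $h$ then follows directly from $K(x_1,\cdot)\lelr K(x_2,\cdot)$ and this formula, with no patching or limiting in $x$ required.
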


\subsection{Weak convergence}
\label{Subsec:Weak.convergence}

Our starting point is the following basic result.

\begin{Lemma}
Consider a sequence $(R_n)_{n\ge 1}$ of distributions on $\R\times \R$ converging weakly to a distribution $R$. Suppose that each $R_n$ satisfies the conditions of Theorem~\ref{Thm:Kernel.st} or even of Theorem~\ref{Thm:Kernel.lr}. Then the limit $R$ satisfies the same conditions too.
\end{Lemma}

This lemma follows from condition~(ii) in both theorems, if we chose $D$ to be the set of all $y \in \R$ such that $P(\{y\}) = Q(\{y\}) = 0$. The question is whether and in what sense the corresponding stochastic kernel $K_n$ for $R_n$ described in condition~(iii) converges to the stochastic kernel $K$ for $R$.

Specifically, we consider conditional quantiles of $K_n(x,\cdot)$. For $\beta \in (0,1)$, let $q_n(\beta \,|\,x)$ be some real number $y$ such that
\[
	K_n(x,(-\infty,y)) \ \le \ \beta \ \le \ K_n(x,(-\infty,y]) .
\]
Since the kernel $K(x,\cdot)$ of the limiting distribution $R$ is not necessarily unique for all $x \in \XX$, we consider the extremal kernels $\Kw$ and $\Ke$ introduced in the proof of Theorem~\ref{Thm:TP2} and the corresponding minimal and maximal $\beta$-quantiles, respectively, that is,
\begin{align*}
	\qw(\beta \,|\, x) \
	&:= \ \min \bigl\{ y \in \R \colon \Kw(x,(-\infty,y]) \ge \beta \bigr\} , \\
	\qe(\beta \,|\, x) \
	&:= \ \max \bigl\{ y \in \R \colon \Ke(x,(-\infty,y)) \le \beta \bigr\} .
\end{align*}

\begin{Theorem}
\label{Thm:Weak.convergence}
Suppose that all $R_n$ (and thus $R$) satisfy the conditions of Theorem~\ref{Thm:Kernel.st}. For any $\beta \in (0,1)$ and arbitrary interior points $x_1 < x_2$ of $\XX$,
\begin{align*}
	\liminf_{n\to\infty} \, q_n(\beta \,|\, x_2) \
	&\ge \ \qw(\beta \,|\, x_1) , \\
	\limsup_{n\to\infty} \, q_n(\beta \,|\, x_1) \
	&\le \ \qe(\beta \,|\, x_2) .
\end{align*}
\end{Theorem}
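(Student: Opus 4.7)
My plan is to translate each of the two claimed inequalities into a one-sided bound on $K_n(x_i,(y,\infty))$ and then to derive those bounds by combining the Portmanteau theorem with the explicit formulas
\[
	\Kw(x_1,(y,\infty)) = \sup_{a<x_1,\,P((a,x_1])>0} \Pr(Y>y \,|\, X \in (a,x_1]) ,
\]
\[
	\Ke(x_2,(y,\infty)) = \inf_{b>x_2,\,P((x_2,b])>0} \Pr(Y>y \,|\, X \in (x_2,b])
\]
obtained in the proofs of Theorems~\ref{Thm:Kernel.st} and \ref{Thm:TP2}. Let $D \subset \R$ denote the dense set of common non-atoms of $P$ and $Q$, so that $(s,t]$ is a $P$-continuity set and $(s,t]\times(y,\infty)$ an $R$-continuity set whenever $s,t,y\in D$. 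The workhorse will be the inequality
\[
	\Pr(Y>y \,|\, X \in (s_0,s_1])
	\ \le \ \Pr(Y>y \,|\, X \in (s_0,s_2])
	\ \le \ \Pr(Y>y \,|\, X \in (s_1,s_2]) ,
\]
valid for $s_0<s_1<s_2$ with positive outer denominators; it follows from Theorem~\ref{Thm:Kernel.st}\,(i) via the weighted-average expression of the middle term, and holds identically with $R$ replaced by $R_n$.

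For the liminf bound I would fix $y\in D$ with $y<\qw(\beta\,|\,x_1)$, i.e.\ $\Kw(x_1,(y,\infty))>1-\beta$. Isotonicity in $a$ makes the sup in the formula for $\Kw(x_1,\cdot)$ realised along $D$, giving $a\in D\cap(-\infty,x_1)$ with $\Pr(Y>y\,|\,X\in(a,x_1])>1-\beta$; the averaging inequality then yields $\Pr(Y>y\,|\,X\in(a,b])>1-\beta$ for any $b\in D$ with $x_1\le b<x_2$. Weak convergence on the continuity set $(a,b]\times(y,\infty)$, the sup-representation of $K_n(x_2,(y,\infty))$, and the averaging inequality for $R_n$ on $(a,x_2]=(a,b]\cup(b,x_2]$ together produce
\[
	K_n(x_2,(y,\infty)) \ \ge \ \Pr_n(Y>y\,|\,X\in(a,x_2]) \ \ge \ \Pr_n(Y>y\,|\,X\in(a,b]) \ > \ 1-\beta
\]
for all large $n$. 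Hence $K_n(x_2,(-\infty,y])<\beta$, so every admissible $q_n(\beta\,|\,x_2)$ must exceed $y$; letting $y\uparrow \qw(\beta\,|\,x_1)$ through $D$ gives the first inequality.

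For the limsup bound I would dually fix $y\in D$ with $y>\qe(\beta\,|\,x_2)$, so that $\Ke(x_2,(y,\infty))\le \Ke(x_2,[y,\infty))<1-\beta$; the inf-formula gives $b\in D$, $b>x_2$, with $\Pr(Y>y\,|\,X\in(x_2,b])<1-\beta$, and for any $x_1'\in D\cap(x_1,x_2)$ the averaging inequality yields $\Pr(Y>y\,|\,X\in(x_1',b])\le \Pr(Y>y\,|\,X\in(x_2,b])<1-\beta$. On the other hand, isotonicity of $K_n$ in its first argument together with the identity $\int_{(x_1',b]}K_n(x,(y,\infty))\,P_n(\d x)=R_n((x_1',b]\times(y,\infty))$ gives $K_n(x_1,(y,\infty))\le \Pr_n(Y>y\,|\,X\in(x_1',b])$, whose right-hand side converges to something strictly below $1-\beta$ by weak convergence. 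Thus $q_n(\beta\,|\,x_1)\le y$ for all large $n$, and letting $y\downarrow \qe(\beta\,|\,x_2)$ through $D$ completes the argument.

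The main obstacle is the bookkeeping when $x_1$ or $x_2$ is itself a $P$-atom: rectangles such as $(a,x_1]\times(y,\infty)$ or $(x_2,b]\times(y,\infty)$ then fail to be $R$-continuity sets, so the Portmanteau theorem cannot be invoked on them directly. The averaging inequality is precisely what allows me to replace the atomic endpoint by the nearby point $b$ (respectively $x_1'$) in $D$ without losing the strict separation from $1-\beta$; once this is handled, the remainder is a routine application of the Portmanteau theorem and condition~(iii) of Theorem~\ref{Thm:Kernel.st}.
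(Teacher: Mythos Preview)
Your argument is correct and follows essentially the same route as the paper: bound $K_n$ at $x_2$ (resp.\ $x_1$) by an interval average via isotonicity, then apply Portmanteau; you use continuity sets and therefore restrict $a,b,y$ to $D$, whereas the paper uses the open/closed-set versions of Portmanteau and compensates with an extra limit $b\downarrow x_1$. One small phrasing point: the inequality $K_n(x_2,(y,\infty))\ge \Pr_n(Y>y\,|\,X\in(a,x_2])$ should be justified by isotonicity of $K_n(\cdot,(y,\infty))$ plus integration (exactly as you do in the dual direction), not by the ``sup-representation'', since $K_n$ is an arbitrary kernel satisfying condition~(iii) and need not equal the specific $\Kw_n$; likewise, the paper's $\Ke$ is built from intervals $[x_2,b)$ rather than $(x_2,b]$, a discrepancy that disappears once you pass to $b'\in D$.
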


\begin{Corollary}
\label{Cor:Weak.convergence}
Under the conditions of Theorem~\ref{Thm:Weak.convergence}, suppose that $\qw(\beta \,|\, x) = \qe(\beta \,|\, x)$ for some fixed $\beta \in (0,1)$ and all $x$ in an interval $(a,b) \subset \XX$. Then
\begin{align*}
	\lim_{n \to \infty} \, \sup_{x \in [a',b']} \bigl| q_n(\beta \,|\, x) - \qw(\beta \,|\, x) \bigr|
	\ = \ 0
\end{align*}
for arbitrary $a < a' < b' < b$.
\end{Corollary}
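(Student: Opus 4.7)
Write $q(\beta \mid x)$ for the common value $\qw(\beta \mid x) = \qe(\beta \mid x)$ on $(a,b)$. Since $\Kw(x,\cdot) \lest \Kw(x',\cdot)$ and $\Ke(x,\cdot) \lest \Ke(x',\cdot)$ whenever $x < x'$ (both being instances of condition~(iii) in Theorem~\ref{Thm:Kernel.st}), both $\qw(\beta \mid \cdot)$ and $\qe(\beta \mid \cdot)$ are isotonic on $\XX$; consequently $q(\beta \mid \cdot)$ is isotonic on $(a,b)$. Now fix $x_0 \in [a',b']$ and pick $x_1,x_2 \in (a,b)$ with $x_1 < x_0 < x_2$. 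Theorem~\ref{Thm:Weak.convergence} (applied once to the pair $(x_1,x_0)$ and once to $(x_0,x_2)$, noting $[a',b']\subset (a,b)$ lies in the interior of $\XX$) gives
\[
	q(\beta \mid x_1) \ \le \ \liminf_{n\to\infty} q_n(\beta \mid x_0)
	\ \le \ \limsup_{n\to\infty} q_n(\beta \mid x_0)
	\ \le \ q(\beta \mid x_2) .
\]
Letting $x_1 \uparrow x_0$ and $x_2 \downarrow x_0$ sandwiches the liminf and limsup of $q_n(\beta \mid x_0)$ between the left and right limits of $q(\beta \mid \cdot)$ at $x_0$.

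The next step---and this is where the real work lies---is to show that $q(\beta \mid \cdot)$ is actually continuous on $(a,b)$. I would argue this using the specific construction of the extremal kernels in the proof of Theorem~\ref{Thm:TP2}: the west kernel $\Kw(x,\cdot)$ is built so that $\qw(\beta \mid \cdot)$ is left-continuous in $x$, while the east kernel $\Ke(x,\cdot)$ is built so that $\qe(\beta \mid \cdot)$ is right-continuous in $x$. Under the hypothesis $\qw = \qe$ on $(a,b)$, the common function $q(\beta \mid \cdot)$ is therefore both left- and right-continuous, hence continuous on $(a,b)$. Substituting this into the sandwich yields the pointwise limit $q_n(\beta \mid x_0) \to q(\beta \mid x_0)$ for every $x_0 \in [a',b']$.

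The final step is to upgrade pointwise convergence to uniform convergence on the compact interval $[a',b']$ by a Polya-type argument. Given $\epsilon > 0$, uniform continuity of $q(\beta \mid \cdot)$ on $[a',b']$ provides points $a' = t_0 < t_1 < \cdots < t_N = b'$ with $q(\beta \mid t_{i+1}) - q(\beta \mid t_i) < \epsilon$ for all $i$. Isotonicity of each $q_n(\beta \mid \cdot)$ on $\XX$ (again from Theorem~\ref{Thm:Kernel.st}(iii)) gives
\[
	q_n(\beta \mid t_i) \ \le \ q_n(\beta \mid x) \ \le \ q_n(\beta \mid t_{i+1})
	\quad\text{for } x \in [t_i,t_{i+1}] ,
\]
and combining this with the pointwise convergence at the finitely many $t_i$ yields $|q_n(\beta \mid x) - q(\beta \mid x)| \le 2\epsilon$ uniformly in $x \in [a',b']$ for all sufficiently large $n$.

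The main obstacle is clearly the continuity of $q(\beta \mid \cdot)$ on $(a,b)$: everything else is mechanical, but continuity relies on the left/right continuity of $\qw$ and $\qe$ that come out of the particular construction of the extremal kernels in the proof of Theorem~\ref{Thm:TP2}. Once those one-sided continuity properties are available, the corollary follows by the sandwich together with the standard monotone-to-continuous uniform convergence argument just outlined.
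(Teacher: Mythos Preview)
Your proposal is correct and follows essentially the same route as the paper: establish one-sided continuity of $\qw(\beta\mid\cdot)$ and $\qe(\beta\mid\cdot)$, conclude continuity of the common function on $(a,b)$, obtain pointwise convergence from Theorem~\ref{Thm:Weak.convergence}, and then upgrade to uniform convergence via the standard monotone-to-continuous argument (the paper simply cites this as well-known, whereas you spell out the Polya argument). One small slip: $q_n(\beta\mid\cdot)$ is isotonic on $\XX_n$, not on $\XX$; what you need is that $[a',b']\subset\XX_n$ for all sufficiently large $n$, which follows from weak convergence of $P_n$ to $P$ as noted at the start of the proof of Theorem~\ref{Thm:Weak.convergence}.
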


The previous findings lead naturally to consistent estimators of a TP2 distribution $R$ in principle. Suppose that $(\hat{R}_n)_n$ is a (random) sequence of distributions which converges to $R$ (almost surely) with respect to the bivariate Kuiper norm $\|\hat{R}_n - R\|_{\rm K}^{}$, say, where
\[
	\|\sigma\|_{\rm K}^{} \ := \ \sup_{a_1 < a_2, b_1 < b_2} \,
		\bigl| \sigma \bigl( (a_1,a_2] \times (b_1,b_2] \bigr) \bigr|
\]
for a finite signed measure $\sigma$ on $\R\times \R$. For instance, $\hat{R}_n$ could be the empirical distribution of independent random variables $(X_1,Y_1), \ldots, (X_n,Y_n)$ with distribution $R$. If $\check{R}_n$ is a TP2 distribution such that
\[
	\|\check{R}_n - \hat{R}_n\|_{\rm K}^{} \
	= \ \min_{\tilde{R} \ \text{a TP2 distribution}} \|\tilde{R} - \hat{R}_n\|_{\rm K}^{} ,
\]
then $\|\check{R}_n - R\|_{\rm K}^{}$ is bounded by $\|\check{R}_n - \hat{R}_n\|_{\rm K}^{} + \|R - \hat{R}_n\|_{\rm K}^{} \le 2  \|R - \hat{R}_n\|_{\rm K}^{}$. In particular, the sequence $(\check{R}_n)_n$ converges weakly to $R$ (almost surely). The existence of a TP2 approximation $\check{R}_n$ of $\hat{R}_n$ is not obvious for arbitrary distributions $\hat{R}_n$, but at least in case of $\hat{R}_n$ having finite support this is true.

\begin{Lemma}
\label{Lem:Minimum.Kuiper.distance}
Let $\hat{R}$ be a distribution with finite support. Then there exists a TP2 distribution $\check{R}$ minimising $\|\tilde{R} - \hat{R}\|_{\rm K}$ over all TP2 distributions $\tilde{R}$.
\end{Lemma}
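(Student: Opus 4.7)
The plan is to collapse any TP2 distribution onto the finite grid of atoms of $\hat{R}$, reducing the minimisation to a compact, finite-dimensional problem on which the Kuiper distance is a continuous function. Let the support of $\hat{R}$ be $\{x_1,\dots,x_m\}\times\{y_1,\dots,y_n\}$ with $x_1<\cdots<x_m$ and $y_1<\cdots<y_n$, and partition $\R$ into cells $T_1:=(-\infty,x_1]$, $T_i:=(x_{i-1},x_i]$ for $2\le i\le m-1$, and $T_m:=(x_{m-1},\infty)$, so that $x_i\in T_i$; define $U_1,\dots,U_n$ on the $y$-axis analogously. Given any TP2 distribution $\tilde{R}$, define its grid collapse
\[
    \check{R}_{\tilde{R}} \ := \ \sum_{i=1}^{m}\sum_{j=1}^{n}\tilde{R}(T_i\times U_j)\,\delta_{(x_i,y_j)}.
\]

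The first step is to verify that $\check{R}_{\tilde R}$ is TP2. Since $T_{i_1}<T_{i_2}$ element-wise whenever $i_1<i_2$, and similarly $U_{j_1}<U_{j_2}$ for $j_1<j_2$, the TP2 property of $\tilde R$ applied to these sets yields
\[
    \tilde{R}(T_{i_1}\times U_{j_2})\,\tilde{R}(T_{i_2}\times U_{j_1}) \ \le \ \tilde{R}(T_{i_1}\times U_{j_1})\,\tilde{R}(T_{i_2}\times U_{j_2}),
\]
which is the TP2 inequality for the probability mass function of $\check{R}_{\tilde R}$, equivalent to $\check{R}_{\tilde R}$ being a TP2 distribution by Corollary~\ref{Cor:DiscreteTP2} (combined with Theorem~\ref{Thm:Kernel.lr}). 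The second step is to show that the Kuiper distance does not increase. For any rectangle $R=(a_1,a_2]\times(b_1,b_2]$, let $I:=\{i:x_i\in(a_1,a_2]\}$ and $J:=\{j:y_j\in(b_1,b_2]\}$; both are consecutive (possibly empty) blocks of indices. Writing $T_I:=\bigcup_{i\in I}T_i$ and $U_J:=\bigcup_{j\in J}U_j$, the generalised rectangle $T_I\times U_J$ contains exactly the same atoms of $\hat R$ as $R$, so $\hat{R}(T_I\times U_J)=\hat{R}(R)$, while $\check{R}_{\tilde R}(R)=\tilde{R}(T_I\times U_J)$ directly from the definition. Therefore
\[
    |\check{R}_{\tilde R}(R)-\hat{R}(R)| \ = \ |\tilde{R}(T_I\times U_J)-\hat{R}(T_I\times U_J)| \ \le \ \|\tilde{R}-\hat{R}\|_{\rm K},
\]
where in the last step any infinite side of $T_I\times U_J$ is approximated by a finite one using continuity of measure.

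It now suffices to show that the minimum is attained over TP2 distributions supported on the $m\times n$ grid of atoms of $\hat R$. By Corollary~\ref{Cor:DiscreteTP2} this set is cut out of the probability simplex in $\R^{mn}$ by the finitely many polynomial inequalities $h_{i_1,j_2}h_{i_2,j_1}\le h_{i_1,j_1}h_{i_2,j_2}$; it is therefore compact, and it is non-empty since it contains the uniform distribution on the grid. For any grid distribution $\check{R}=\sum_{i,j}h_{ij}\delta_{(x_i,y_j)}$, the Kuiper distance reduces to
\[
    \|\check R-\hat R\|_{\rm K} \ = \ \max_{(I,J)}\Bigl|\sum_{(i,j)\in I\times J}(h_{ij}-\hat h_{ij})\Bigr|,
\]
the maximum ranging over the finitely many pairs $(I,J)$ of consecutive index blocks in $\{1,\dots,m\}$ and $\{1,\dots,n\}$, so it is a continuous function of $h$. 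A continuous function on a non-empty compact set attains its minimum, and by the preceding collapse argument this grid minimum coincides with $\inf_{\tilde R\text{ TP2}}\|\tilde R-\hat R\|_{\rm K}$.

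The main obstacle, in my view, is verifying cleanly that the grid collapse simultaneously preserves TP2 and does not increase the Kuiper distance. This hinges on choosing the partition $\{T_i\}$ so that each cell contains exactly one atom of the $x$-marginal of $\hat R$ and that a union of consecutive cells maps back to a generalised rectangle whose atoms in $\hat R$ match those of the original finite rectangle, with the unbounded edge cells $T_1,T_m,U_1,U_n$ handled by approximation. Once this reduction is in place, the existence of a minimiser is a routine compactness-and-continuity argument on the finite-dimensional simplex of grid PMFs.
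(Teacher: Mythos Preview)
Your proof is correct, and it takes a genuinely different route from the paper's. The paper discretizes an arbitrary TP2 distribution $\tilde R$ onto a finer $(2\ell+1)\times(2m+1)$ grid obtained by inserting an extra point between every pair of consecutive atoms; this finer discretization preserves $\tilde R(S)$ exactly for each set $S$ in two finite families $\overline{\mathcal S},\underline{\mathcal S}$ of closed and open rectangles, and the paper first argues that the Kuiper distance equals a maximum over these families, so the discretization leaves $\|\tilde R-\hat R\|_{\rm K}$ \emph{unchanged}. You instead collapse directly onto the $m\times n$ grid of atoms of $\hat R$ and show only that the Kuiper distance \emph{does not increase}, which is all that is needed for existence of a minimizer. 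Your argument is shorter and yields the sharper conclusion that a minimizer may already be sought among TP2 distributions supported on the $m\times n$ atom grid rather than on the paper's larger $(2\ell+1)\times(2m+1)$ grid; the paper's approach, on the other hand, makes explicit the exact finite reformulation of $\|\tilde R-\hat R\|_{\rm K}$ via $\overline{\mathcal S}$ and $\underline{\mathcal S}$, which may be of independent computational interest. Two minor points: you should say the support of $\hat R$ is \emph{contained in} the product grid (it need not equal it), and the edge case $m=1$ (or $n=1$) of your partition should be handled by taking $T_1=\R$; neither affects the substance of the argument.
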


The proof of this lemma reveals that the best approximating distribution $\check{R}$ may have finite support too, but it is not unique. Its explicit computation is yet an open problem.

\section{Proofs}
\label{Sec:Proofs}

\subsection{Isotonic densities}

For later purposes, we need a constructive version of the Radon-Nikodym theorem in a special case.

\begin{Lemma}
\label{Lem:IRN}
Let $\mu$ and $\nu$ be finite measures on $\R$ such that $\nu \le \mu$. Further, suppose that
\begin{equation}
\label{ineq:IRN1}
	\mu((y,z]) \nu((x,y]) \ \le \ \mu((x,y]) \nu((y,z])
	\quad\text{whenever} \ x < y < z .
\end{equation}
Then
\[
	f(x) \ := \ \sup_{a < x} \, \frac{\nu((a,x])}{\mu((a,x])} \ \in \ [0,1]
		\quad (\text{with} \ 0/0 := 0)
\]
defines an isotonic density of $\nu$ with respect to $\mu$. Moreover, if $\mu((x',x]) = 0$ for numbers $x' < x$, then $f(x') = f(x)$. Further, if $x \in \R$ satisfies $\mu((a,x]) > 0$ for all $a < x$, then
\[
	f(x) \ = \ \lim_{a \to x\,-} \, \frac{\nu((a,x])}{\mu((a,x])} .
\]
In particular, $f(x) = \nu(\{x\}) / \mu(\{x\})$ in case of $\mu(\{x\}) > 0$.
\end{Lemma}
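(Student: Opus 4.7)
My plan proceeds in three stages: first, show $f$ is isotonic and establish a pointwise sandwich inequality; next, deduce the explicit formulas (a)--(c); finally, verify that $f$ is a density of $\nu$ with respect to $\mu$. The density step is the substantive one, because the pointwise definition of $f$ as a supremum is not obviously compatible with integration against $\mu$ at atoms of $\mu$ where $f$ can jump.

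I would abbreviate $R(u,v) := \nu((u,v])/\mu((u,v])$ with the convention $0/0 = 0$. Hypothesis~\eqref{ineq:IRN1} then reads $R(x,y) \le R(y,z)$ whenever $x<y<z$ and both denominators are positive, while $\nu \le \mu$ forces $R \in [0,1]$. For $x<y$ and any $a<x$ the hypothesis yields $R(a,x) \le R(x,y) \le f(y)$, giving $f(x) \le f(y)$ and, via $\nu((x,y]) = R(x,y)\,\mu((x,y])$, the sandwich
\[
	f(x)\,\mu((x,y]) \ \le \ \nu((x,y]) \ \le \ f(y)\,\mu((x,y]) .
\]

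Properties (a)--(c) follow from this setup. For (a), $\mu((x',x]) = 0$ forces $\nu((x',x]) = 0$; then $R(a,x) = R(a,x')$ for $a \le x'$ while $R(a,x) = 0$ for $a \in [x',x)$, so the suprema over $a<x$ and over $a<x'$ coincide. For (b), when all denominators involved are positive, $R(a_1,x)$ is a convex combination of $R(a_1,a_2)$ and $R(a_2,x)$ for $a_1 < a_2 < x$, and the hypothesis bounds the former by the latter; hence $a \mapsto R(a,x)$ is isotonic on $(-\infty,x)$ and the supremum coincides with $\lim_{a \to x-} R(a,x)$. Property (c) follows from (b) since $\mu((a,x]) \to \mu(\{x\})>0$ and $\nu((a,x]) \to \nu(\{x\})$ as $a \uparrow x$.

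For the density property, $\nu \ll \mu$ means the Radon--Nikodym theorem supplies a $[0,1]$-valued Borel density $g$ of $\nu$ with respect to $\mu$, and it suffices to show $f = g$ $\mu$-almost everywhere. At each atom $x$ of $\mu$, property~(c) and the identity $\nu(\{x\}) = g(x)\,\mu(\{x\})$ both give $\nu(\{x\})/\mu(\{x\})$, so $f(x) = g(x)$. Away from atoms, the at most countable set of discontinuities of the isotonic function $f$ meets the non-atoms of $\mu$ in a $\mu$-null set, so $\mu$-almost every non-atom is a continuity point of $f$. At such a point $x$ the sandwich yields $f(a) \le \nu((a,b])/\mu((a,b]) \le f(b)$ for $a<x<b$ with $\mu((a,b])>0$, and both bounds tend to $f(x)$ as $a \uparrow x$ and $b \downarrow x$. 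The standard Lebesgue differentiation theorem for Borel measures on $\R$ asserts that the same ratio converges to $g(x)$ for $\mu$-a.e.\ $x$, so $f(x) = g(x)$ at every such continuity point. Combining the atomic and non-atomic identifications gives $f = g$ $\mu$-a.e., as required. The main obstacle is precisely this reconciliation of the pointwise supremum with the a.e.-defined Radon--Nikodym density at the jump set of $f$, which forces the separate treatment of atoms of $\mu$ via~(c).
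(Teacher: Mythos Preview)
Your argument is correct, but the density step follows a genuinely different route from the paper's. The paper never invokes the abstract Radon--Nikodym theorem or any differentiation theorem: after establishing the sandwich $f(x)\mu((x,y])\le\nu((x,y])\le f(y)\mu((x,y])$ and the atomic identity $\nu(\{x\})=f(x)\mu(\{x\})$, it extends the sandwich to $\inf_I f\cdot\mu(I)\le\nu(I)\le\sup_I f\cdot\mu(I)$ for all bounded intervals $I$, then partitions a given $I$ into the level strips $I_j=I\cap\{(j-1)/k<f\le j/k\}$, applies the sandwich on each piece, sums, and lets $k\to\infty$ to obtain $\nu(I)=\int_I f\,\d\mu$ directly. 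This is completely elementary and self-contained, which matches the lemma's stated purpose as a \emph{constructive} version of Radon--Nikodym; your approach instead assumes the abstract theorem to get $g$ and then identifies $f=g$ $\mu$-a.e.\ via Besicovitch differentiation, which is shorter once that machinery is granted but somewhat circular relative to the lemma's intent.

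Two minor points in your write-up: your isotonicity argument silently assumes $\mu((x,y])>0$, since with $\mu((x,y])=0$ the hypothesis does not yield $R(a,x)\le R(x,y)$; the missing case is exactly your property~(a), so just state it first. And in the final step, ``at every such continuity point'' should read ``at $\mu$-almost every such point'', since the differentiation theorem is only an a.e.\ statement --- the conclusion $f=g$ $\mu$-a.e.\ is unaffected.
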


\begin{Remark}
The density $f$ constructed in Lemma~\ref{Lem:IRN} is minimal in the sense that any isotonic density $\tilde{f}$ of $\nu$ with respect to $\mu$ satisfies $\tilde{f} \ge f$ pointwise. For if $f(x) > 0$, then for all $a < x$ with $\mu((a,x]) > 0$,
\[
	\mu((a,x])^{-1} \nu((a,x])
	\ = \ \mu((a,x])^{-1} \int_{(a,x]} \tilde{f}(w) \, \mu(\mathrm{d}w)
	\ \le \ \tilde{f}(x) .
\]
A maximal isotonic density could be constructed analogously: First note that with a simple approximation argument one can show that condition~\eqref{ineq:IRN1} is equivalent to the same condition with $[x_0,x_1)$ and $[x_1,x_2)$ in place of $(x_0,x_1]$ and $(x_1,x_2]$, respectively. Then one could define the density at $x$ by
\[
	\inf_{b > x} \, \frac{\nu([x,b))}{\mu([x,b))} \ \in \ [0,1]
		\quad (\text{with} \ 0/0 := 1) .
\]
\end{Remark}

\begin{proof}[\bf Proof of Lemma~\ref{Lem:IRN}]
Let $\gamma := \nu/\mu$ with $0/0 := 0$. Then $f(x) = \sup_{a < x} \gamma((a,x])$. Condition~\eqref{ineq:IRN1} is easily verified to be equivalent to
\begin{equation}
\label{ineq:IRN2}
	\gamma((x,y]) \ \le \ \gamma((y,z])
	\quad\text{whenever} \ x < y < z \ \text{and} \ \mu((x,z]) > 0 .
\end{equation}
As to isotonicity of $f$, let $x < y$. If $\mu((x,y]) = 0$, then $\nu((x,y]) = 0$, too, whence $\gamma((a,y]) = 0$ for $x \le a < y$ and $\gamma((a,y]) = \gamma((a,x])$ for $a < x$. Consequently,
\[
	f(y) \
	= \ \sup_{a < x} \, \gamma((a,y]) \
	= \ \sup_{a < x} \, \gamma((a,x]) \
	= \ f(x) .
\]
On the other hand, if $\mu((x,y]) > 0$, then \eqref{ineq:IRN2} implies that $\gamma((a,x]) \le \gamma((x,y]) \le f(y)$ for $a < x$, whence $f(x) \le \gamma((x,y]) \le f(y)$. This implies that
\begin{equation}
\label{ineq:IRN3}
	f(x) \mu((x,y]) \ \le \ \nu((x,y]) \ \le \ f(y) \mu((x,y])
	\quad\text{whenever} \ x < y .
\end{equation}

If $x \in \R$ is such that $\mu((a,x]) > 0$ for all $a < x$, then it follows from \eqref{ineq:IRN2} that for $a < a' < x$,
\[
	\gamma((a,x]) \
	= \ \frac{\mu((a,a'])}{\mu((a,x])} \, \gamma((a,a'])
		+ \frac{\mu((a',x])}{\mu((a,x])} \, \gamma((a',x])
	\ \le \ \gamma((a',x]) ,
\]
because $\gamma((a,a']) \le \gamma((a',x])$. Hence, $\gamma((a,x])$ is isotonic in $a < x$, and this implies the representation of $f(x)$ as $\lim_{a \to x\,-} \gamma((a,x])$. In particular,
\begin{equation}
\label{ineq:IRN3'}
	\nu(\{x\}) \ = \ f(x) \mu(\{x\})
	\quad\text{for all} \ x \in \R ,
\end{equation}
because this equation is trivial in case of $\mu(\{x\}) = 0$.

Now the previous inequalities \eqref{ineq:IRN3} and \eqref{ineq:IRN3'} are generalized as follows: For any bounded interval $I \subset \R$,
\begin{equation}
\label{ineq:IRN4}
	\inf_{z \in I} \, f(z) \, \mu(I) \
	\le \ \nu(I) \ \le \
	\sup_{z \in I} \, f(z) \mu(I) .
\end{equation}
In case of $I = (x,y]$, this follows from \eqref{ineq:IRN3} applied to $(x',y]$ with $x' \to x\,+$. In case of $I = (x,y)$, this follows from \eqref{ineq:IRN4} applied to $(x,y']$ with $y' \to y\,-$. In case of $I = [x,y]$ or $I = [x,y)$, we may deduce from \eqref{ineq:IRN3'} that
\[
	\nu(I) \
	= \ \nu(\{x\}) + \nu(I \setminus \{x\}) \
	= \ f(x) \mu(\{x\}) + \nu(I \setminus \{x\}) ,
\]
and then the assertion follows from applying the available inequalities to $I \setminus \{x\}$ instead of $I$.

It remains to be shown that $f$ is a density of $\nu$ with respect to $\mu$. That means, $\nu(I) = \int_I f \, d\mu$ for any bounded interval $I$. To this end we fix an arbitrary integer $k > 1$ and split $I$ into the disjoint intervals $I_1, \ldots, I_k$ where $I_1 := I \cap \{f \le 1/k\}$ and $I_j := I \cap \{(j-1)/k < f \le j/k\}$ for $2 \le j \le k$. Then it follows from \eqref{ineq:IRN4} that
\[
	\nu(I) \
	= \ \sum_{j=1}^k \nu(I_j) \
	\begin{cases}
		\displaystyle
		\le \ \sum_{j=1}^k (j/k) \mu(I_j) \
			\le \int_I (f + 1/k) \, \mathrm{d}\mu , \\[3ex]
		\displaystyle
		\ge \ \sum_{j=1}^k ((j-1)/k) \mu(I_j) \
			\ge \int_I (f - 1/k) \, \mathrm{d}\mu ,
	\end{cases}
\]
and letting $k \to \infty$ yields the asserted equation.
\end{proof}

\subsection{Proofs for Section~\ref{Sec:LROrder}}

We start with an elementary result about products and ratios of nonnegative numbers. The proof is elementary and thus omitted.

\begin{Lemma}
\label{Lem:Ratios.Products}
Let $r_1, r_2$ and $s_1, s_2$ be numbers in $[0,\infty)$ such that $(r_1,r_2), (s_1,s_2) \ne (0,0)$. Then
\[
	r_2 s_1 \ \le \ r_1 s_2
	\quad\text{if and only if}\quad
	\frac{r_2}{r_1} \ \le \ \frac{s_2}{s_1}
\]
(where the latter fractions are in $[0,\infty]$).
\end{Lemma}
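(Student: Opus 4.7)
My plan is to treat this as straightforward case analysis on whether the denominators $r_1$ and $s_1$ vanish, using the hypotheses $(r_1,r_2)\ne(0,0)$ and $(s_1,s_2)\ne(0,0)$ to rule out the indeterminate form $0/0$ in the ratios on the right-hand side.

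First I would dispatch the generic case $r_1>0$ and $s_1>0$. Here both ratios $r_2/r_1$ and $s_2/s_1$ are finite nonnegative real numbers, and multiplying the inequality $r_2/r_1 \le s_2/s_1$ by the strictly positive quantity $r_1 s_1$ gives $r_2 s_1 \le r_1 s_2$, and conversely dividing the product inequality by $r_1 s_1$ recovers the ratio inequality. So the two statements are literally the same inequality rewritten.

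Next I would handle the three boundary cases. If $r_1 = 0$, then $r_2 > 0$ by hypothesis, so $r_2/r_1 = \infty$; the right-hand inequality $\infty \le s_2/s_1$ therefore forces $s_2/s_1 = \infty$, i.e.\ $s_1 = 0$ (and then $s_2>0$). On the product side, $r_1 s_2 = 0$, so $r_2 s_1 \le r_1 s_2$ reduces to $r_2 s_1 \le 0$, which (given $r_2 > 0$) holds iff $s_1 = 0$. So again the two inequalities are equivalent. The case $r_1>0$, $s_1 = 0$ is the symmetric one: then $s_2 > 0$ so $s_2/s_1 = \infty$, the ratio inequality holds trivially, and the product inequality reads $0 \le r_1 s_2$, which also holds trivially. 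Finally if $r_1 = s_1 = 0$, both ratios equal $\infty$ and both sides of the product inequality are $0$, so both statements are true.

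I do not anticipate any obstacle here; the whole content is keeping the conventions $c/0 = \infty$ for $c>0$ straight and noting that the excluded pairs $(0,0)$ are exactly the ones that would produce the indeterminate form $0/0$. Since the authors themselves declare the proof elementary and omit it, a case-split along the lines above is the shortest honest write-up.
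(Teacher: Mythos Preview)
Your case analysis is correct and complete; the paper itself declares the proof elementary and omits it entirely, so there is no argument to compare against. Your treatment of the boundary cases via the conventions $c/0=\infty$ for $c>0$ is exactly what the omitted proof would have to do, and the write-up you sketch is an appropriate fill-in.
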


\begin{proof}[\bf Proof of Theorem~\ref{Thm:EquivDef}]
We prove the result via a chain of implications, introducing two additional equivalent conditions:\\[1ex]
\textbf{(v)} \ There exists an isotonic densitiy $\rho : \R \to [0,1]$ of $Q_2$ with respect to $Q_1 + Q_2$.\\[0.5ex]
\textbf{(vi)} \ Suppose that $\mu$ is a $\sigma$-finite measure dominating $Q_1$ and $Q_2$. Then one can choose corresponding densities $g_j = \mathrm{d}Q_j/\mathrm{d}\mu$ ($j = 1,2$) such that $g_2/g_1$ is isotonic on the set $\{g_1 + g_2 > 0\}$.

\medskip

\noindent
\textbf{Step 1.} Note that the inequality in condition~(ii) is trivial whenever $g_1(x) = g_2(x) = 0$ or $g_1(y) = g_2(y) = 0$. Thus condition~(ii) is equivalent to the same condition with $x, y$ being restricted to the set $\{g_1 + g_2 > 0\} = \bigl\{ (g_1,g_2) \ne (0,0) \bigr\}$. But then it follows from Lemma~\ref{Lem:Ratios.Products} that conditions~(i) and (ii) are equivalent.

\noindent
\textbf{Step 2.} Suppose that condition~(ii) holds true. Then for Borel sets $A < B$,
\begin{align*}
	Q_1(B) Q_2(A) \
	&= \ \int 1_{A}(x) 1_{B}(y) g_1(y) g_2(x) \, \mu(\d x) \mu(\d y) \\
	&\le \ \int 1_{A}(x) 1_{B}(y) g_1(x) g_2(y) \, \mu(\d x) \mu(\d y) \\
	&= \ Q_1(A) Q_2(B) .
\end{align*}
Hence, condition~(iii) is satisfied as well, and obviously this implies condition~(iv), because the latter is a special case of the former.

\noindent
\textbf{Step 3.} Suppose that condition~(iv) is satisfied. With simple approximation argument one can show that condition~(iv) holds true with $D = \R$. Then we may apply Lemma~\ref{Lem:IRN} to $\mu := Q_1 + Q_2$ and $\nu := Q_2$. This yields an isotonic density $\rho : \R \to [0,1]$ of $Q_2$ with respect to $Q_1 + Q_2$. Hence, condition~(v) is satisfied as well.

\noindent
\textbf{Step 4.} Suppose that condition~(v) holds true. Let $\mu$ be a $\sigma$-finite measure dominating $Q_1, Q_2$. Then $\mu$ dominates $Q_1 + Q_2$, so by the Radon--Nikodym theorem there exists a density $h$ of $Q_1 + Q_2$ with respect to $\mu$. Consequently,
\begin{align*}
	Q_2(B) \
	&= \ \int_B \rho \, \d (Q_1 + Q_2)
		\ = \ \int_B \rho h \, \d \mu , \\
	Q_1(B) \
	&= \ \int_B (1 - \rho) \, \d (Q_1 + Q_2)
		\ = \ \int_B (1 - \rho) h \, \mathrm{d}\mu .
\end{align*}
Consequently, $g_2 := \rho h$ and $g_1 := (1 - \rho) h$ are densities of $Q_2$ and $Q_1$, respectively, with respect to $\mu$. On $\{g_1 + g_2 > 0\} = \{h > 0\}$, the ratio $g_2/g_1$ equals $\rho/(1 - \rho)$ and is isotonic. Hence, condition~(vi) is satisfied. Finally, since $\mu := Q_1 + Q_2$ is a finite measure dominating $Q_1, Q_2$, condition~(vi) implies condition~(i).
\end{proof}

\begin{proof}[\bf Proof of Lemma~\ref{Lem:LR_iff_condST}]
For notational convenience, we write $Q_{jC} := Q_j(\cdot \,|\, C)$ ($j = 1,2$). Suppose first that $Q_1 \lelr Q_2$ (property~(i)), and let $C$ be a Borel set with $Q_1(C),Q_2(C)>0$. Since $Q_1(B)Q_2(A)\le Q_1(A)Q_2(B)$ for all Borel sets $A,B$ such that $A < B$, and since $A\cap C < B\cap C$ too, we find that
\[
	Q_{1C}(B)Q_{2C}(A) = \frac{Q_1(B\cap C) Q_2(A\cap C)}{Q_1(C) Q_2(C)} \
	\le \ \frac{Q_1(A\cap C) Q_2(B\cap C)}{Q_1(C)Q_2(C)} = Q_{1C}(A)Q_{2C}(B) .
\]
Therefore, $Q_{1C} \lelr Q_{2C}$ (property~(ii)), and by Remark~\ref{Rem:ST_LR}, this implies that $Q_{1C} \lest Q_{2C}$ (property~(iii)).

Property~(iv) is obviously a consequence of property~(iii), so it remains to show that property~(iv) implies property~(i). Again, a simple approximation argument shows that property~(iv) holds true with $D = \R$. To verify that $Q_1 \lelr Q_2$, it suffices to show that
\begin{equation}
\label{ineq:Q12AB}
	Q_1(B) Q_2(A) \ \le \ Q_1(A)Q_2(B)
\end{equation}
for arbitrary $A = (x,y]$ and $B = (y,z]$ with $x < y < z$. With $C := A \cup B = (x,z]$, it suffices to consider the case $Q_1(C), Q_2(C) > 0$, because otherwise \eqref{ineq:Q12AB} is trivial. Then \eqref{ineq:Q12AB} is equivalent to
\[
	Q_{1C}(B) Q_{2C}(A) \ \le \ Q_{1C}(A) Q_{2C}(B) ,
\]
and since $Q_{jC}(A) = 1 - Q_{jC}(B)$, the latter inequality is equivalent to
\[
	Q_{1C}(B) \ \le \ Q_{2C}(B) .
\]
But this is a consequence of $Q_{1C} \lest Q_{2C}$, because $Q_{jC}(B) = Q_{jC}((y,\infty))$.
\end{proof}

\begin{proof}[\bf Proof of Lemma~\ref{Lem:LR_Partial_Order}]
Reflexivity of the likelihood ratio order is obvious. To show antisymmetry, note that $Q_1 \lelr Q_2$ and $Q_2 \lelr Q_1$ implies that $Q_1 \lest Q_2$ and $Q_2 \lest Q_1$. But the latter two inequalities mean that the distribution functions of $Q_1$ and $Q_2$ coincide, whence $Q_1 \equiv Q_2$.

It remains to prove transitivity. Suppose that $Q_1 \lelr Q_2 \lelr Q_3$. We want to show that the distributions $Q_1, Q_3$ satisfy condition~(iv) in Theorem~\ref{Thm:EquivDef}, that is,
\begin{equation}
\label{ineq:Q1Q3}
	Q_1((y,z]) Q_3((x,y]) \ \le \ Q_1((x,y]) Q_3((y,z])
\end{equation}
for arbitrary $x < y < z$. It suffices to consider the nontrivial situation that $Q_1((x,z])$ and $Q_3((x,z])$ are strictly positive. But then $Q_1 \lelr Q_2$ and $Q_2 \lelr Q_3$ implies that $Q_2((x,z]) > 0$ too. For if $Q_2((-\infty,x]) > 0 = Q_2((x,z])$, then
\[
	Q_1((x,z]) Q_2((-\infty,x]) \ \le \ Q_1((-\infty,x]) Q_2((x,z]) \ = \ 0 ,
\]
whence $Q_1((x,z]) = 0$. Likewise, if $Q_2((x,z]) = 0 < Q_2((z,\infty))$, then
\[
	Q_2((z,\infty)) Q_3((x,z]) \ \le \ Q_2((x,z]) Q_3((z,\infty)) \ = \ 0 ,
\]
whence $Q_3((x,z]) = 0$. Consequently, Lemma~\ref{Lem:Ratios.Products} implies that
\[
	\frac{Q_1((y,z])}{Q_1((x,y])} \
	\le \ \frac{Q_2((y,z])}{Q_2((x,y])} \
	\le \ \frac{Q_3((y,z])}{Q_3((x,y])} ,
\]
and this implies inequality~\eqref{ineq:Q1Q3}.
\end{proof}

\begin{proof}[\bf Proof of Corollary~\ref{Cor:LR_ROC}]
Suppose first that $Q_1 \lelr Q_2$. Let $(a_1,a_2)$, $(b_1,b_2)$ and $(c_1,c_2)$ be three different points in $\mathrm{ROC}(P,Q)$ such that $a_1 \le b_1 \le c_1$ and $a_2 \le b_2 \le c_2$. For $\ell = a,b,c$, we may write $(\ell_1,\ell_2) = \bigl( Q_1(H_\ell), Q_2(H_\ell) \bigr)$ with a halfline $H_\ell \in \mathcal{H}$, where $H_a \subsetneq H_b \subsetneq H_c$. But then $C := H_c \setminus H_b < H_b\setminus H_a =: B$ element-wise, and condition~(iii) in Theorem~\ref{Thm:EquivDef} implies that
\[
	(b_1 - a_1)(c_2 - b_2) \ = \ Q_1(B) Q_2(C)
	\ \le \ Q_1(C) Q_2(B) \ = \ (c_1 - b_1)(b_2 - a_2) .
\]
Since $(r_1,r_2) := (b_1 - a_1, b_2 - a_2)$ and $(s_1,s_2) := (c_1 - b_1, c_2 - b_2)$ differ from $(0,0)$ by assumption, Lemma~\ref{Lem:Ratios.Products} shows that the latter displayed inequality is equivalent to
\[
	\frac{b_2 - a_2}{b_1 - a_1} \ \ge \ \frac{c_2 - b_2}{c_1 - b_1} .
\]
This shows that the ROC curve of $P$ and $Q$ is concave.

Now suppose that $\mathrm{ROC}(P,Q)$ is concave. This implies that for arbitrary points $(a_1,a_2)$, $(b_1,b_2)$, $(c_1,c_2)$ in $\mathrm{ROC}(P,Q)$ with $a_1 \le b_1 \le c_1$ and $a_2 \le b_2 \le c_2$,
\[
	(b_1 - a_1)(c_2 - b_2) \ 
	\le \ (c_1 - b_1)(b_2 - a_2) ,
\]
see Lemma~\ref{Lem:Ratios.Products}. Specifically, let $(\ell_1,\ell_2) = \bigl( Q_1(H_\ell), Q_2(H_\ell) \bigr)$ for $\ell = a,b,c$ with halflines $H_c := (x,\infty)$, $H_b := (y,\infty)$ and $H_a := (z,\infty)$, where $x < y < z$. Then the latter displayed inequality reads
\[
	Q_1((y,z]) Q_2((x,y]) \ \le \ Q_1((x,y]) Q_2((y,z]) ,
\]
whence $Q_1$ and $Q_2$ satisfy condition~(iv) of Theorem~\ref{Thm:EquivDef}.
\end{proof}

The proof of Corollary~\ref{Cor:Equiv_LR_ODC} uses elementary inequalities for distribution and quantile functions; see for instance Chapter~1 of \cite{Shorack_Wellner_1986}:

\begin{Lemma}
\label{Lem:Quant}
For any distribution function $G$ and all $\alpha\in [0,1]$, we have $G(G^{-1}(\alpha)) \ge \alpha$, with equality if, and only if, $\alpha \in G(\Rbar)$. For all $y \in \R$, $G^{-1}(G(y)) \le y$, with equality if, and only if, $G(y_o)<G(y)$ for all $y_o < y$.
\end{Lemma}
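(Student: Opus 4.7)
The lemma is a pair of elementary facts about the generalized inverse, and the plan is to derive both inequalities and their equality characterizations directly from the defining identity
\[
	G^{-1}(\alpha) \ = \ \min\{y \in \Rbar : G(y) \ge \alpha\} ,
\]
noting that the infimum is attained because $G$ is nondecreasing and right-continuous, so the set $\{y : G(y) \ge \alpha\}$ is of the form $[y_\alpha, \infty]$ (with the convention $G(-\infty) = 0$, $G(\infty) = 1$) and hence contains its smallest element.

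For the first inequality, $G(G^{-1}(\alpha)) \ge \alpha$ is immediate from the fact that $G^{-1}(\alpha)$ lies in $\{y : G(y) \ge \alpha\}$. For the equality characterization, if $\alpha = G(y_0)$ for some $y_0 \in \Rbar$, then $y_0 \in \{y : G(y) \ge \alpha\}$, so $G^{-1}(\alpha) \le y_0$, and monotonicity of $G$ yields $G(G^{-1}(\alpha)) \le G(y_0) = \alpha$; combined with the reverse inequality this gives equality. Conversely, if $G(G^{-1}(\alpha)) = \alpha$, then trivially $\alpha \in G(\Rbar)$.

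For the second inequality, membership of $y$ in $\{z : G(z) \ge G(y)\}$ gives $G^{-1}(G(y)) \le y$. For equality, I would simply unpack the definition of the minimum: $G^{-1}(G(y)) = y$ is equivalent to saying that no $y_o < y$ satisfies $G(y_o) \ge G(y)$, i.e.\ $G(y_o) < G(y)$ for every $y_o < y$.

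There is no real obstacle here; the only subtle point worth flagging explicitly is that right-continuity of $G$ legitimizes writing $\min$ instead of $\inf$ in the definition of $G^{-1}$, which is what makes the first inequality immediate rather than requiring a limiting argument. Everything else is a one-line consequence of monotonicity and set membership.
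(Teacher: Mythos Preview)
Your argument is correct. The paper does not actually prove this lemma; it simply states it and refers the reader to Chapter~1 of \cite{Shorack_Wellner_1986}, so there is no in-paper proof to compare against. Your direct derivation from the definition $G^{-1}(\alpha) = \min\{y \in \Rbar : G(y) \ge \alpha\}$ is the standard one and is complete: the only delicate point---that the minimum (rather than infimum) is attained because right-continuity of $G$ makes $\{y : G(y) \ge \alpha\}$ a closed half-line in $\Rbar$---is exactly what you flag, and with that in hand both inequalities and both equality characterizations are immediate. (Note that in the paper's statement of the lemma, ``$G(y_o) < G(x)$'' should read ``$G(y_o) < G(y)$''; your proof addresses the intended statement.)
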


\begin{proof}[\bf Proof of Corollary~\ref{Cor:Equiv_LR_ODC}]
Let us write $H$ instead of $H_{G_1,G_2}$ to lighten the notation. Suppose that $Q_1 \lelr Q_2$, and let $r,s,t\in G_1(\Rbar)$ with $r<s<t$. Lemma~\ref{Lem:Quant} yields that $G_1^{-1}(r) < G_1^{-1}(s) < G_1^{-1}(t)$, so $A := \bigl( G_1^{-1}(r),G_1^{-1}(s) \bigr]$ and $B := \bigl( G_1^{-1}(s),G_1^{-1}(t) \bigr]$ are such that
\begin{align*}
	Q_1(A) \
	&= \ G_1^{}(G_1^{-1}(s)) - G_1^{}(G_1^{-1}(r)) \ = \ s-r \ > \ 0, \\
	Q_1(B) \
	&= \ G_1^{}(G_1^{-1}(t)) - G_1^{}(G_1^{-1}(s)) \ = \ t-s \ > \ 0.
\end{align*}
Thus,
\[
	(t - s) \bigl( H(s) - H(r) \bigr)
	\ = \ Q_1(B) Q_2(A) \ \le \ Q_1(A) Q_2(B)
	\ = \ (s - r) \bigl( H(t) - H(s) \bigr) ,
\]
and dividing both sides by $(s-r)(t-s)$ shows that $H$ is convex on $G_1(\Rbar)$.

Suppose now that $H$ is convex on $G_1(\Rbar)$. To verify that $Q_1 \lelr Q_2$, we have to show that
\begin{equation}
\label{ineq:Q12xyz}
	Q_1((x,y]) Q_2((y,z]) - Q_1((y,z]) Q_2((x,y]) \ \ge \ 0
\end{equation}
for arbitrary real numbers $x < y < z$. Since $Q_2 \ll Q_1$, inequality~\eqref{ineq:Q12xyz} is trivial if $Q_1((x,y]) = 0$ or $Q_1((y,z]) = 0$. Hence, it suffices to verify \eqref{ineq:Q12xyz} in case of $Q_1((x,y]) = G_1(y) - G_1(x) > 0$ and $Q_1((y,z]) = G_1(z) - G_1(y) > 0$. For $w = x,y,z$, let $\tilde{w} := G_1^{-1}(G_1^{}(w)) \le w$, and note that $G_1(\tilde{w}) = G_1(w)$, so $Q_1((\tilde{w},w]) = 0 = Q_2((\tilde{w},w])$. Consequently, with $r := G_1(x)$, $s := G_1(y)$ and $t := G_1(z)$, the left-hand side of \eqref{ineq:Q12xyz} equals
\begin{align*}
	(s - r) Q_2((\tilde{y},\tilde{z}])
		- (t - s) Q_2((\tilde{x},\tilde{y}]) \
	&= \ (s - r) \bigl( H(t) - H(s) \bigr)
		- (t - s) \bigl( H(s) - H(r) \bigr) \\
	&= \ (s - r)(t - s)
		\Bigl( \frac{H(t) - H(s)}{t - s}
			- \frac{H(s) - H(r)}{s - r} \Bigr) \\
	&\ge \ 0
\end{align*}
by convexity of $H$.
\end{proof}

\subsection{Proofs for Section~\ref{Sec:Bivariate}}

\begin{proof}[\bf Proof of Theorem~\ref{Thm:Kernel.st}]
Condition~(iii) states that $K(x,\cdot)$ is isotonic in $x \in \XX$ with respect to stochastic order. That means, for any fixed number $y$, $K(x,(y,\infty))$ is isotonic in $x \in \XX$. Consequently, for Borel sets $A_1, A_2$ with $A_1 < A_2$,
\begin{align*}
	R(A_1 \times (y,\infty)) P(A_2) \
	&= \ \int_\XX 1_{A_1}(x) K(x,(y,\infty)) \, P(\d x) \int_{\XX} 1_{A_2}(x) P(\d x) \\
	&\le \ P(A_1) K(x_o, (y,\infty)) P(A_2) \\
	&\le \ \int_{\XX} 1_{A_1}(x) \, P(\d x) \int_{\XX} 1_{A_2}(x) K(x,(y,\infty)) \, P(\d x) \\
	&= \ P(A_1) R(A_2 \times (y,\infty)) ,
\end{align*}
where $x_o$ is an arbitrary number in $\XX$ such that $A_1\cap \XX \le \{x_o\} \le A_2\cap \XX$. This shows that condition~(iii) implies condition~(i).

Condition~(ii) is just a special case of condition~(i), so it remains to deduce condition~(iii) from condition~(ii). Note first that via a simple approximation argument, condition~(ii) extends to $D = \R$. Now we apply Lemma~\ref{Lem:IRN} to $\mu(\cdot) := P$ and $\nu(\cdot) := \Pr(X \in\cdot, Y > y)$ with arbitrary $y \in \R$. This yields
\[
	S(x,y) \ := \ \sup_{a < x} \, \Pr(Y > y \,|\, a < X \le x)
\]
with $\Pr(\cdot \,|\, a < X \le x) := 0$ in case of $\Pr(a < X \le x) = 0$. According to Lemma~\ref{Lem:IRN}, $S(\cdot, y)$ is an isotonic density of $\nu$ with respect to $\mu$. Moreover, $S(x,\cdot)$ is antitonic and right-cont\-inu\-ous. Antitonicity is obvious. To verify right-continuity, note that $S(x,y) = 0$ implies that $S(x,\cdot) \equiv 0$ on $[y,\infty)$. If $S(x,y) > 0$,
\[
	\liminf_{z \to y\,+} \, S(x,z) \
	\ge \ \liminf_{z \to y\,+} \, \Pr(Y > z \,|\, a < X \le x) \
	= \ \Pr(Y > y \,|\, a< X \le x)
\]
for any fixed $a < x$ such that $\Pr(a < X \le x) > 0$. This shows that $\liminf_{z \to y\,+} S(x,z) \ge S(x,y)$. Now the idea is to interpret $S(x,y)$ as $K(x,(y,\infty))$ for some probability measure $K(x,\cdot)$ whenever $x \in \XX$. Such a probability measure $K(x,\cdot)$ exists if and only if
\[
	\lim_{y \to -\infty} \, S(x,y) \ = \ 1
	\quad\text{and}\quad
	\lim_{y \to \infty} \, S(x,y) \ = \ 0 .
\]
Indeed, if $\Pr(X \le x) > 0$, then there exists a real number $a < x$ such that $\Pr(a < X \le x) > 0$. Hence, by definition of $S(x,y)$,
\[
	S(x,y) \ \ge \ \Pr(Y > y \,|\, a < X \le x) \ \to \ 1
	\quad\text{as} \ y \to - \infty .
\]
Similary, if $\Pr(X \ge x) > 0$, then $\Pr(X = x) > 0$ or $\Pr(x < X \le x') > 0$ for some real number $x' > x$. In the former case, Lemma~\ref{Lem:IRN} entails that
\[
	S(x,y) \ = \ \Pr(Y > y \,|\, X = x) \ \to \ 0
	\quad\text{as} \ y \to \infty .
\]
In the latter case, it follows from condition~(ii) that
\[
	S(x,y) \ \le \ \Pr(Y > y \,|\, x < X \le x') \ \to \ 0
	\quad\text{as} \ y \to \infty .
\]
These considerations show that for any $x\in\R$,
\[
	K(x,(y,\infty)) \ := \ \begin{cases}
		S(x,y) & \text{if} \ x \in \XX , \\
		Q((y,\infty)) & \text{if} \ x \not\in \XX ,
	\end{cases}
\]
defines a probability measure $K(x,\cdot)$ on $\R$ such that $K(x_1, \cdot) \lest K(x_2,\cdot)$ for $x_1, x_2 \in \XX$ with $x_1 < x_2$. Recall that $S(\cdot,y)$ is a Radon-Nikodym derivative of $\Pr(X \in \cdot, Y > y)$ with respect to $P$. Consequently, $K(\cdot,B)$ is measurable and satisfies the equation $\Pr(X \in A, Y \in B) = \Ex \bigl( 1_A(X) K(X,B) \bigr)$ for all sets $A \in \BB$ and every open halfline $B = (y,\infty)$, $y \in \R$. Since the latter family of halflines is closed under intersections and generates $\BB$, the preceding properties of $K(\cdot,B)$ extend to any set $B \in \BB$.
\end{proof}

\begin{proof}[\bf Proof of Theorem~\ref{Thm:Kernel.lr}]
Condition~(iii) states that $K(x,\cdot)$ is isotonic in $x \in \XX$ with respect to likelihood ratio order. This implies that for Borel sets $A_1,A_2,B_1,B_2$ with $A_1 < A_2$ and $B_1 < B_2$ and for independent copies $X', X''$ of $X$,
\begin{align*}
	\Pr(&X \in A_1, Y \in B_2) \Pr(X \in A_2, Y \in B_1) \\
	&= \ \Ex \bigl( 1_{A_1 \cap \XX}(X') 1_{A_2\cap \XX}(X'')
		K(X',B_2) K(X'',B_1) \bigr) \\
	&\le \ \Ex \bigl( 1_{A_1 \cap \XX}(X') 1_{A_2\cap \XX}(X'')
		K(X',B_1) K(X'',B_2) \bigr) \\
	&= \ \Pr(X \in A_1, Y \in B_1) \Pr(X \in A_2, Y \in B_2) .
\end{align*}
Hence, condition~(iii) implies condition~(i). Since condition~(ii) is a special case of condition~(i), it remains to show that condition~(ii) implies condition~(iii). One can easily show that condition~(ii) in Theorem~\ref{Thm:Kernel.lr} implies condition~(ii) in Theorem~\ref{Thm:Kernel.st} by letting $y_0 \to -\infty$ and $y_2 \to \infty$. Thus we may construct the stochastic kernel $K$ precisely as in the proof of Theorem~\ref{Thm:Kernel.st}.

It remains to be shown that $K(x_1,\cdot) \lelr K(x_2,\cdot)$ for $x_1, x_2 \in \XX$ with $x_1 < x_2$. The construction of $K$ implies the following facts: If $\Pr(x_1 < X \le x_2) = 0$, then $K(x_1,\cdot) \equiv K(x_2,\cdot)$, and the assertion is trivial. Otherwise, for $j = 1,2$, \[
	x_j' \ := \ \min \bigl( \{x_j\} \cup \{x < x_j : \Pr(x < X \le x_j) = 0\} \bigr)
\]
is well-defined, and $x_1' \le x_1 < x_2' \le x_2$. Moreover, $\Pr(a < X \le x_j') > 0$ for all $a < x_j'$, whence Lemma~\ref{Lem:IRN} implies that
\[
	K(x_j,(y,\infty)) \ = \ \lim_{a \to x_j'\,-} \Pr(Y > y \,|\, a < X \le x_j)
\]
for all $y \in \R$. Consequently, for $y_0 < y_1 < y_2$, the difference
\[
	K(x_1,(y_0,y_1]) K(x_2,(y_1,y_2]) - K(x_1,(y_1,y_2]) K(x_2,(y_0,y_1])
\]
is the limit of the difference
\begin{align*}
	\Pr(&y_0 < Y \le y_1 \,|\, a < X \le x_1)
			\Pr(y_1 < Y \le y_2 \,|\, b < X \le x_2) \\
	&- \ \Pr(y_1 < Y \le y_2 \,|\, a < X \le x_1)
			\Pr(y_0 < Y \le y_1 \,|\, b < X \le x_2)
\end{align*}
as $a \to x_1'\,-$ and $b \to x_2'\,-$. But the latter difference is nonnegative as soon as $x_1 \le b$. In case of $\Pr(x_1 < X \le b, y_0 < Y \le y_2) = 0$, this follows from condition~(ii) applied with $(a, x_1, x_2)$ in place of $(x_0,x_1,x_2)$, noting that $\Pr(\cdot \,|\, b < X \le x_2) \equiv \Pr(\cdot \,|\, x_1 < X \le x_2)$. In case of $\Pr(x_1 < X \le b, y_0 < Y \le y_2) > 0$, one has to apply condition~(ii) with $(a, x_1, b)$ and then with $(x_1, b, x_2)$ in place of $(x_0,x_1,x_2)$ and apply Lemma~\ref{Lem:Ratios.Products} twice.
\end{proof}

In the subsequent proof, we reinterpret occasionally a distribution on $\R$ as a distribution on $[-\infty,\infty]$ with mass zero on $\{-\infty,\infty\}$.

\begin{proof}[\bf Proof of Theorem~\ref{Thm:TP2}]
We start with a general observation which applies to any kernel $K$ as in part~(iii) of Theorem~\ref{Thm:Kernel.lr}. Let $x \in \XX$ and let $A_1, A_2 \in \BB$ such that $\Pr(X \in A_1), \Pr(X \in A_2) > 0$ and $A_1 \le \{x\} \le A_2$. Then,
\begin{equation}
\label{eq:3Qs}
	\Pr(Y > \Snw(x) \,|\, X \in A_1) \
	= \ 0 \
	= \ \Pr(Y < \Sse(x) \,|\, X \in A_2)
\end{equation}
and
\begin{equation}
\label{ineq:3Qs}
	\LL(Y \,|\, X \in A_1) \
	\lelr \ K(x,\cdot) \
	\lelr \ \LL(Y \,|\, X \in A_2) .
\end{equation}
Equality~\eqref{eq:3Qs} follows immediately from the definition of $\Snw(x)$ and $\Sse(x)$. To verify inequality~\eqref{ineq:3Qs}, consider sets $B_1 < B_2$ in $\BB$. Then the order constraint about $K$ implies that
\begin{align*}
	\Pr(Y \in B_2 \,|\, X \in A_1) K(x,B_1) \
	&= \ \Pr(X \in A_1)^{-1} \Ex \bigl( 1_{A_1 \cap \XX}(X) K(X,B_2) K(x,B_1) \bigr) \\
	&\le \ \Pr(X \in A_1)^{-1} \Ex \bigl( 1_{A_1 \cap \XX}(X) K(X,B_1) K(x,B_2) \bigr) \\
	&= \ \Pr(Y \in B_1 \,|\, X \in A_1) K(x,B_2) ,
\end{align*}
and the same reasoning yields the inequality
\[
	K(x,B_2) \Pr(Y \in B_1 \,|\, X \in A_2) \
	\le \ K(x,B_1) \Pr(Y \in B_2 \,|\, X \in A_2) .
\]

Now let $\Kw$ be the specific kernel constructed in the proof of Theorems~\ref{Thm:Kernel.st} and \ref{Thm:Kernel.lr}. For $x \in \XX$ and $y \in \R$, $\Kw(x,(y,\infty))$ is the supremum of $\Pr(Y > y \,|\, a < X \le x)$ over all $a < x$ such that $\Pr(a < X \le x) > 0$. This implies that
\begin{equation}
\label{ineq:Kw.Sw}
	\Kw(x,(y,\infty)) \ \begin{cases}
		= 0 & \text{if} \ y \ge \Snw(x) , \\
		> 0 & \text{if} \ y < \Snw(x) ,
	\end{cases}
\end{equation}
because $\Pr(X \le x, Y > y) = 0$ implies that $\Pr(a < X \le x, Y > y) = 0$ for all $a < x$, while $\Pr(X \le x, Y > y) > 0$ implies that $\Pr(a < X \le x, Y > y) > 0$ for some $a < x$. Analogously one could construct a kernel $\Ke$: For $x \in \XX$ let $\Ke(x,(-\infty,y))$ be the supremum of $\Pr(Y < y \,|\, x \le X < b)$ over all $b > x$ such that $\Pr(x \le X < b) > 0$, and for $x \in \R \setminus \XX$ let $\Ke(x,\cdot) := Q$. Then we have a second version $\Ke$ of $K$ which satisfies condition~(iii) in Theorem~\ref{Thm:Kernel.lr}, and now for $x \in \XX$ and $y \in \R$,
\begin{equation}
\label{ineq:Ke.Se}
	\Ke(x,(-\infty,y)) \ \begin{cases}
		= 0 & \text{if} \ y \le \Sse(x) , \\
		> 0 & \text{if} \ y > \Sse(x) .
	\end{cases}
\end{equation}
A first consequence of these constructions is that
\begin{equation}
\label{eq:bracketing.Snw.Sse}
	\Pr \bigl( \Sse(X) \le Y \le \Snw(X) \bigr) \ = \ 1 .
\end{equation}
The two kernels $\Kw, \Ke$ are extremal in the sense that for any kernel $K$ satisfying condition~(iii) in Theorem~\ref{Thm:Kernel.lr} and all $x \in \XX$,
\begin{equation}
\label{ineq:bracketing.Kw.K.Ke}
	\Kw(x,\cdot) \ \lelr \ K(x, \cdot) \ \lelr \ \Ke(x, \cdot) .
\end{equation}
This can be deduced from Remark~\ref{Rem:Weak.convergence1} and inequalities~\eqref{ineq:3Qs} as follows: With $x_{\rm w} := \min\{x' \le x : \Pr(x' < X \le x) = 0\}$, the distribution $\Kw(x, \cdot)$ is the weak limit of
\[
	\LL(Y \,|\, a < X \le x) \ \lelr \ K(x,\cdot)
	\quad\text{as} \ a \to x_{\rm w}- ,
\]
and with $x_{\rm e} := \max\{x' \ge x : \Pr(x \le X < x') = 0\}$, the distribution $\Ke(x, \cdot)$ is the weak limit of
\[
	\LL(Y \,|\, x \le X < b) \ \gelr \ K(x,\cdot)
	\quad\text{as} \ b \to x_{\rm e}+ .
\]
An important consequence of inequalities \eqref{ineq:Kw.Sw}, \eqref{ineq:Ke.Se} and \eqref{ineq:bracketing.Kw.K.Ke} is that
\begin{equation}
\label{ineq:modified.K}
	K \bigl( x, \bigl[ \Sse(x), \Snw(x) \bigr] \bigr) \ > \ 0
	\quad\text{for} \ x \in \XX \setminus \XX_o .
\end{equation}
Indeed, if $K \bigl( x, \bigl( -\infty,\Sse(x) \bigr) \bigr) > 0$, then the inequalities \eqref{ineq:Kw.Sw} and $\Kw(x, \cdot) \lelr K(x, \cdot)$ imply that
\begin{align*}
	0 \
	&< \ \Kw \bigl( x, \bigl[ \Sse(x), \Snw(x) \bigr] \bigr)
		K \bigl( x, \bigl( -\infty, \Sse(x) \bigr) \bigr) \\
	&\le \ \Kw \bigl( x, \bigl( -\infty, \Sse(x) \bigr) \bigr)
		K \bigl( x, \bigl[ \Sse(x), \Snw(x) \bigr] \bigr)
		\ \le \ K \bigl( x, \bigl[ \Sse(x), \Snw(x) \bigr] \bigr) .
\end{align*}
Likewise, if $K \bigl( x, \bigl( \Snw(x), \infty \bigr) \bigr) > 0$, then the inequalities \eqref{ineq:Ke.Se} and $K(x,\cdot) \lelr \Ke(x, \cdot)$ imply that
\begin{align*}
	0 \
	&< \ K \bigl( x, \bigl( \Snw(x), \infty \bigr) \bigr)
		\Ke \bigl( x, \bigl[ \Sse(x), \Snw(x) \bigr] \bigr) \\
	&\le \ K \bigl( x, \bigl[ \Sse(x), \Snw(x) \bigr] \bigr)
		\Ke \bigl( x, \bigl( \Snw(x), \infty \bigr) \bigr)
		\ \le \ K \bigl( x, \bigl[ \Sse(x), \Snw(x) \bigr] \bigr) .
\end{align*}
Consequently, if we chose an arbitrary isotonic function $S : \XX_o \to \R$ such that $\Snw \le S \le \Sse$ on $\XX_o$, we may replace $K$ with $K_{\rm new}$ given by
\[
	K_{\rm new}(x, B) \ := \ \begin{cases}
		K \bigl( x, \bigl[ \Sse(x), \Snw(x) \bigr] \bigr)^{-1}
			K \bigl( x, B \cap \bigl[ \Sse(x), \Snw(x) \bigr] \bigr)
			& \text{if} \ x \in \XX \setminus \XX_o , \\
		\delta_{S(x)}(B)
			& \text{if} \ x \in \XX_o , \\
		K(x, B)
			& \text{if} \ x \in \R \setminus \XX .
	\end{cases}
\]
Indeed, it follows from \eqref{eq:bracketing.Snw.Sse} that $K \bigl( X, \bigl[ \Sse(X), \Snw(X) \bigr] \bigr) = 1$ almost surely, so $K_{\rm new}(X, \cdot) \equiv K(X, \cdot)$ almost surely. Consequently, $K_{\rm new}$ describes the conditional distribution of $Y$ given $X$, too. Moreover, it satisfies condition~(iii) of Theorem~\ref{Thm:Kernel.lr}: Let $x_1, x_2 \in \XX$ with $x_1 < x_2$. If $x_1$ and $x_2$ lie in $\XX \setminus \XX_o$, it follows from $K(x_1, \cdot) \lelr K(x_2,\cdot)$ and part~(i) of the subsequent Lemma~\ref{Lem:Truncation.lr} that $K_{\rm new}(x_1,\cdot) \lelr K_{\rm new}(x_2,\cdot)$, too. If at least one of the two points $x_1, x_2$ lies in $\XX_o$, then $K_{\rm new}(x_1, (-\infty,y]) = 1 = K_{\rm new}(x_2, [y,\infty))$ for some $y \in \R$, so $K_{\rm new}(x_1, \cdot) \lelr K_{\rm new}(x_2,\cdot)$ by Lemma~\ref{Lem:Truncation.lr}~(ii). Precisely, if $x_1 \in \XX_o$ and $x_2 \in \XX \setminus \XX_o$, then $S(x_1) \le \Sse(x_1) \le \Sse(x_2)$, so the asserted inequalities hold for any $y$ between $S(x_1)$ and $\Sse(x_2)$. Likewise, if $x_1 \in \XX \setminus \XX_o$ and $x_2 \in \XX_o$, we can pick any $y$ between $\Snw(x_1)$ and $S(x_2)$. Finally, if $x_1, x_2 \in \XX_o$, any $y$ between $S(x_1)$ and $S(x_2)$ is suitable.

Finally, let $x \in \XX \setminus \XX_o$. Then we may apply Lemma~\ref{Lem:LR.sandwich} to $Q_0 := \LL(Y \,|\, X \le x)$, $Q_* := K_{\rm new}(x,\cdot)$, $Q_1 := \LL(Y \,|\, X > x)$ and $\lambda_0 := \Pr(X \le x)$, $\lambda_1 := \Pr(X > x)$, so $\lambda_0 Q_0 + \lambda_1 Q_1 = Q = \LL(Y)$. In case of $\Pr(X > x) = 0$, we replace ``$\le x$'' with ``$< x$'' and ``$> x$'' with ``$\ge x$''. Here we exclude the trivial situation that $\Pr(X = x) = 1$. This shows that
\[
	h(x,y) \ := \ \lim_{w \to y\,-} \, \frac{K_{\rm new}(x,(w,y])}{Q((w,y])}
\]
(with $0/0 := 0$) defines a bounded density of $K_{\rm new}(x,\cdot)$ with respect to $Q$, and obviously, $h(x,y) = 0$ if $y < \Sse(x)$ or $y > \Snw(x)$. The explicit representation of $h(x,y)$ in terms of probability ratios $K(x,(w,y])/Q((w,y])$, $w < y$, and the fact that $K_{\rm new}(x_1,\cdot) \lelr K_{\rm new}(x_2,\cdot)$ for arbitrary points $x_1 < x_2$ in $\XX$ implies that $h$ is TP2 on $\XX \setminus \XX_o \times \R$.
\end{proof}

\begin{Lemma}
\label{Lem:Truncation.lr}
Let $Q_1, Q_2$ be probability distributions on $\R$.
\begin{description}
\item[(i)] If $[a_1, b_1]$ and $[a_2, b_2]$ are intervals in $[-\infty,\infty]$ such that $a_1 \le a_2$, $b_1 \le b_2$, and $Q_j([a_j,b_j]) > 0$ for $j = 1,2$, then it follows from $Q_1 \lelr Q_2$ that $Q_1(\cdot \,|\, [a_1,b_1]) \lelr Q_2(\cdot \,|\, [a_2,b_2])$, too.

\item[(ii)] If $Q_1((-\infty,y]) = 1 = Q_2([y,\infty))$ for some $y \in \R$, then $Q_1 \lelr Q_2$.
\end{description}
\end{Lemma}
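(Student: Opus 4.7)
For part (i), the plan is to invoke the density characterization of $\lelr$ from Theorem~\ref{Thm:EquivDef}(i). Pick a $\sigma$-finite dominating measure $\mu$ and densities $g_j = \mathrm{d}Q_j/\mathrm{d}\mu$ such that $g_2/g_1$ is isotonic on $\{g_1+g_2>0\}$. Since $Q_j([a_j,b_j])>0$, the truncated distribution $\tilde{Q}_j := Q_j(\cdot \,|\, [a_j,b_j])$ has density $\tilde{g}_j := 1_{[a_j,b_j]} g_j / Q_j([a_j,b_j])$ with respect to $\mu$, and it suffices to show that $\tilde{g}_2/\tilde{g}_1$ is isotonic on $\{\tilde{g}_1 + \tilde{g}_2 > 0\} \subset [a_1,b_1]\cup[a_2,b_2]$. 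The analysis naturally splits this set into three regions whose relative positions are determined by the assumption $a_1 \le a_2$, $b_1 \le b_2$: on $[a_1,b_1]\setminus[a_2,b_2]$, lying leftmost, the ratio equals $0$; on the (possibly empty) overlap $[a_1,b_1]\cap[a_2,b_2]$ in the middle, it equals a strictly positive constant times $g_2/g_1$; and on $[a_2,b_2]\setminus[a_1,b_1]$, lying rightmost, it equals $+\infty$. Isotonicity then follows by checking the finitely many cases for the locations of $x<y$: within the middle region it uses the isotonicity of $g_2/g_1$ directly, and every other configuration either has the left point at $0$ or the right point at $+\infty$.

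For part (ii), I would verify condition~(iii) of Theorem~\ref{Thm:EquivDef} by a direct case split. Let $A,B\in\BB$ with $A<B$ elementwise. If there exists $b\in B$ with $b\le y$, then every $a\in A$ satisfies $a<b\le y$, so $A\subset(-\infty,y)$ and $Q_2(A) \le Q_2((-\infty,y)) = 1 - Q_2([y,\infty)) = 0$. Otherwise $B\subset(y,\infty)$, so $Q_1(B) \le Q_1((y,\infty)) = 1 - Q_1((-\infty,y]) = 0$. In either case the product $Q_1(B)Q_2(A)$ vanishes, and the inequality $Q_1(B)Q_2(A) \le Q_1(A)Q_2(B)$ holds trivially.

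Neither part poses a real obstacle; if anything, the only point requiring care is the bookkeeping in part (i) to ensure that every pair $x<y$ with $\tilde{g}_1(x)+\tilde{g}_2(x)>0$ and $\tilde{g}_1(y)+\tilde{g}_2(y)>0$ falls into one of the admissible configurations of the three sub-regions, which is exactly what the hypotheses $a_1\le a_2$ and $b_1\le b_2$ guarantee.
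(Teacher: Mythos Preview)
Your proof is correct, but both parts take a different route from the paper.

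For part~(i), you work directly with densities via Theorem~\ref{Thm:EquivDef}(i) and a three-region case analysis on $[a_1,b_1]\cup[a_2,b_2]$. The paper instead argues more modularly: it first invokes Lemma~\ref{Lem:LR_iff_condST} to condition both $Q_1$ and $Q_2$ on $C = [a_1,b_2]$, reducing to the case $a_1 = -\infty$, $b_2 = \infty$; then it uses transitivity (Lemma~\ref{Lem:LR_Partial_Order}) together with the observation that $Q_1(\cdot\,|\,(-\infty,b_1])$ has the antitonic density $Q_1((-\infty,b_1])^{-1} 1_{(-\infty,b_1]}$ with respect to $Q_1$, and dually $Q_2(\cdot\,|\,[a_2,\infty))$ has an isotonic density with respect to $Q_2$. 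Your approach is more self-contained and does not rely on the two auxiliary lemmas, at the cost of the geometric bookkeeping you mention; the paper's is shorter once those lemmas are available.

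For part~(ii), you verify condition~(iii) of Theorem~\ref{Thm:EquivDef} by showing that the product $Q_1(B)Q_2(A)$ vanishes for every pair $A<B$. The paper instead exhibits an explicit isotonic density of $Q_2$ with respect to $Q_1+Q_2$, namely $\rho(x) = 1_{[x=y]}\gamma + 1_{[x>y]}$ with $\gamma = Q_2(\{y\})/(Q_1+Q_2)(\{y\})$. Both arguments are equally short; yours sidesteps the computation of the density at the possible shared atom $y$.
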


\begin{proof}[\bf Proof of Lemma~\ref{Lem:Truncation.lr}]
As to part~(i), $Q_1 \lelr Q_2$ implies that $Q_1(\cdot \,|\, C) \lelr Q_2(\cdot \,|\, C)$ for any Borel set $C$ with $Q_1(C), Q_2(C) > 0$, see Lemma~\ref{Lem:LR_iff_condST}. Applying this fact with $C = [a_1,b_2]$ shows that it suffices to prove part~(i) with $a_1 = -\infty$ and $b_2 = \infty$. But then, by transitivity of $\lelr$, it suffices to show that $Q_1(\cdot \,|\, (-\infty,b_1]) \lelr Q_1$ and $Q_2 \lelr Q_2(\cdot \,|\, [a_2,\infty))$. These assertions follow from observing that $Q_1(\cdot \,|\, (-\infty,b_1])$ has antitonic density $Q_1((-\infty,b_1])^{-1} 1_{(-\infty,b_1]}$ with respect to $Q_1$, and $Q_2(\cdot \,|\, [a_2,\infty))$ has isotonic density $Q_2([a_2,\infty))^{-1} 1_{[a_2,\infty)}$ with respect to $Q_2$.

As to part~(ii), a density $\rho$ of $Q_2$ with respect to $Q_1 + Q_2$ is given by $\rho(x) = 1_{[x = y]} \gamma + 1_{[x > y]}$, where $\gamma := Q_2(\{y\}) / (Q_1 + Q_2)(\{y\}) \in [0,1]$. Thus $\rho$ is isotonic, so $Q_1 \lelr Q_2$.
\end{proof}

\begin{Lemma}
\label{Lem:Left.support.Q}
Let $Q$ be a distribution on $\R$, and let $\supp_{\rm left}(Q)$ be the set of all $y \in \R$ such that $Q((x,y]) > 0$ for arbitrary $x < y$. Then $Q(\supp_{\rm left}(Q)) = 1$.
\end{Lemma}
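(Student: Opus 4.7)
The plan is to show the complement $N := \R \setminus \supp_{\rm left}(Q)$ has $Q$-measure zero, by covering it with a countable family of left-open intervals, each of which has $Q$-measure zero. Concretely, $y \in N$ means there exists $x < y$ with $Q((x,y]) = 0$; by density of the rationals in $(x,y)$, one can then choose a rational $q$ with $x < q < y$, and $(q,y] \subseteq (x,y]$ forces $Q((q,y]) = 0$ too. Hence $N$ is contained in $\bigcup_{q \in \mathbb{Q}} A_q$, where
\[
    A_q \ := \ \bigl\{ y > q : Q((q,y]) = 0 \bigr\} .
\]

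The key observation is that each $A_q$ is downward closed inside $(q,\infty)$: if $y \in A_q$ and $q < y' < y$, then $(q,y'] \subset (q,y]$ gives $Q((q,y']) = 0$, so $y' \in A_q$. Consequently $A_q$ is an interval with left endpoint $q$ (excluded), i.e.\ $A_q = (q, s_q)$ or $A_q = (q, s_q]$ with $s_q := \sup A_q \in (q,\infty]$. In the first case, continuity of $Q$ from below yields
\[
    Q(A_q) \ = \ Q((q,s_q)) \ = \ \lim_{y \uparrow s_q,\, y \in A_q} Q((q,y]) \ = \ 0 ,
\]
and in the second case $Q(A_q) = Q((q,s_q]) = 0$ directly. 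Thus $Q(A_q) = 0$ for every rational $q$.

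Combining countable subadditivity with the cover $N \subseteq \bigcup_{q \in \mathbb{Q}} A_q$ gives
\[
    Q(N) \ \le \ \sum_{q \in \mathbb{Q}} Q(A_q) \ = \ 0 ,
\]
which yields $Q(\supp_{\rm left}(Q)) = 1$. There is no real obstacle here; the only slightly delicate point is allowing $s_q = \infty$ (handled by the limit formulation) and remembering that $A_q$ might or might not contain its supremum, but both subcases are absorbed into the inequality $Q(A_q) \le Q((q,s_q]) = 0$ via continuity from below.
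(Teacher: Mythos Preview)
Your proof is correct and takes a genuinely different route from the paper's. The paper argues by decomposing $\R$ into the (countable) family $\mathcal{J}$ of maximal open intervals with $Q$-measure zero, then characterises $\R \setminus \supp_{\rm left}(Q)$ exactly as the union of these intervals together with those right endpoints $\sup(J)$ that happen to be $Q$-atoms of mass zero. Your argument bypasses this structural characterisation entirely: you simply cover the complement by the countable family $\{A_q : q \in \mathbb{Q}\}$ and show each $A_q$ is a null interval. This is more direct and arguably more elementary, since you never need to identify \emph{which} points lie outside $\supp_{\rm left}(Q)$, only that they are covered by countably many null sets. The paper's approach, on the other hand, yields a precise description of the complement, which could be useful if one wanted more than just the measure statement. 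One cosmetic remark: your closing sentence invokes ``$Q((q,s_q]) = 0$ via continuity from below'' even when $s_q = \infty$, which is slightly awkward notation; but the body of your argument already handles that case cleanly via the limit, so this does not affect correctness.
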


\begin{proof}[\bf Proof of Lemma~\ref{Lem:Left.support.Q}]
If $Q(I) > 0$ for any nonvoid open interval, then $\supp_{\rm left}(Q) = \R$, so the claim is obvious. Otherwise, let $\mathcal{J}$ be the family of maximal open intervals $J \subset \R$ such that $Q(J) = 0$. This family $\mathcal{J}$ is finite or countable. A point $y$ belongs to $\R \setminus \supp_{\rm left}(Q)$ if and only if for some $J \in \mathcal{J}$, either $y \in J$, or $y = \sup(J)$ and $Q(\{y\}) = 0$. Consequently, $\R \setminus \supp_{\rm left}(Q)$ is equal to the union of $\bigcup_{J \in \mathcal{J}} J$ and a finite or countable set of points $y$ such that $Q(\{y\}) = 0$. This shows that $Q(\R \setminus \supp_{\rm left}(Q)) = 0$.
\end{proof}

\begin{Lemma}
\label{Lem:LR.sandwich}
Let $Q_0, Q_*, Q_1$ be probability distributions on $\R$ such that $Q_0 \lelr Q_* \lelr Q_1$ and $Q_0((x,\infty)) + Q_1((-\infty,x)) > 0$ for arbitrary $x \in \R$. Let $Q := \lambda_0 Q_0 + \lambda_1 Q_1$ with $\lambda_0, \lambda_1 > 0$ such that $\lambda_0 + \lambda_1 = 1$. Then
\[
	g(y) \ := \ \lim_{x \to y\,-} \, \frac{Q_*((x,y])}{Q((x,y])}
\]
with $0/0 := 0$ exists and defines a bounded density of $Q_*$ with respect to $Q$.
\end{Lemma}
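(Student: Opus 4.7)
The plan is to exploit the sandwiching $Q_0 \lelr Q_* \lelr Q_1$ via the two partial ratios
\[
	r_j(x,y) \;:=\; Q_*((x,y]) / Q_j((x,y]), \qquad j \in \{0,1\}, \quad x < y,
\]
(with the conventions $0/0 := 0$ and $t/0 := +\infty$ for $t > 0$). Applying $Q_0 \lelr Q_*$ to the adjacent intervals $(x,x'] < (x',y]$ gives $Q_*((x,x'])/Q_0((x,x']) \le Q_*((x',y])/Q_0((x',y])$, and writing $r_0(x,y)$ as the weighted average $[Q_0((x,x']) r_0(x,x') + Q_0((x',y]) r_0(x',y)]/Q_0((x,y])$ then forces $r_0(x,y) \le r_0(x',y)$, so $r_0(\cdot,y)$ is isotonic on $(-\infty,y)$. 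Symmetrically, $Q_* \lelr Q_1$ makes $r_1(\cdot,y)$ antitonic. Hence the left-hand limits $f_j(y) := \lim_{x \to y-} r_j(x,y) \in [0,\infty]$ exist by monotonicity.

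In the nondegenerate case where $Q_0((x,y])$ and $Q_1((x,y])$ are both positive for $x$ sufficiently close to $y$, the reciprocal identity
\[
	\frac{Q((x,y])}{Q_*((x,y])} \;=\; \frac{\lambda_0}{r_0(x,y)} + \frac{\lambda_1}{r_1(x,y)}
\]
combined with the monotone $x$-limits gives $g(y) = 1/(\lambda_0/f_0(y) + \lambda_1/f_1(y))$; the degenerate subcases in which $Q_j((x,y])$ eventually vanishes as $x \to y-$ for $j = 0$, $j = 1$, or both (the last yielding $g(y) = 0$) are handled directly by the monotonicity of $r_0$ and $r_1$ alone. This yields in particular the pointwise bound $g(y) \le \min(f_0(y)/\lambda_0, f_1(y)/\lambda_1)$.

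For the uniform bound, the same telescoping in the $y$ variable shows that $y \mapsto f_0(y)$ is isotonic and $y \mapsto f_1(y)$ antitonic. The main obstacle is then to exhibit a single reference point $y^{**}$ at which $g$ is controlled on both sides. I would take $y^{**}$ to be a left-support point of $Q_0$ lying strictly between $\underline{x}_1 := \inf\supp(Q_1)$ and $\bar{x}_0 := \sup\supp(Q_0)$; existence of such $y^{**}$ follows from Lemma~\ref{Lem:Left.support.Q} applied to $Q_0$ together with the support condition, which forces $\bar{x}_0 > \underline{x}_1$ (otherwise the condition fails on the gap), where the transitive consequence $Q_0 \lelr Q_1$ is used to rule out degenerate atomic configurations. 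At $y^{**}$, applying $Q_0 \lelr Q_*$ on $(x,y^{**}] < (y^{**},z]$ for any $z > y^{**}$ with $Q_0((y^{**},z]) > 0$ uniformly bounds $r_0(x,y^{**}) \le Q_*((y^{**},z])/Q_0((y^{**},z]) < \infty$, so $f_0(y^{**}) < \infty$; applying $Q_* \lelr Q_1$ on $(-\infty,y^{**}] < (y^{**},y]$ for $y > y^{**}$ bounds $r_1(y^{**},y) \le Q_*((-\infty,y^{**}])/Q_1((-\infty,y^{**}]) =: M_1 < \infty$ (the denominator positive because $y^{**} > \underline{x}_1$). Combined with the $y$-monotonicity of $f_0$ and $f_1$, one obtains $g(y) \le f_0(y^{**})/\lambda_0$ for $y \le y^{**}$ and $g(y) \le M_1/\lambda_1$ for $y \ge y^{**}$, yielding the uniform bound.

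Finally, to identify $g$ with $\d Q_*/\d Q$: the same interval argument---pairing an interval $(x_1,y_1]$ satisfying $Q((x_1,y_1]) = 0$ against $(y_1,\infty)$ under $Q_0 \lelr Q_*$ and against $(-\infty,x_1]$ under $Q_* \lelr Q_1$---yields $Q_0((y_1,\infty)) = 0$ and $Q_1((-\infty,x_1]) = 0$, so the support condition at $x = y_1$ fails unless $Q_*((x_1,y_1]) = 0$; regularity then gives $Q_* \ll Q$. By the Lebesgue--Besicovitch differentiation theorem, the pointwise left-hand limit $g(y)$ agrees with the Radon--Nikodym derivative $\d Q_*/\d Q(y)$ at $Q$-a.e.\ $y$; combined with the uniform bound on $g$, this gives $Q_*(I) = \int_I g \,\d Q$ for every bounded interval $I$, identifying $g$ as the bounded density of $Q_*$ with respect to $Q$.
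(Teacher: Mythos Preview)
Your overall strategy---monotone ratios from the two likelihood-ratio orderings, combined at a crossover point---mirrors the paper's, but the execution has a genuine gap. The claimed isotonicity of $r_0(\cdot,y)$ (and antitonicity of $r_1(\cdot,y)$) breaks down at $0/0$ values under your convention. Take $Q_0 = \tfrac12\delta_0 + \tfrac12\delta_2$, $Q_1 = \tfrac14\delta_0 + \tfrac34\delta_2$, $Q_* = \tfrac13\delta_0 + \tfrac23\delta_2$; one checks $Q_0 \lelr Q_* \lelr Q_1$ and the support condition. With $y=1$ one has $r_0(-1,1) = (1/3)/(1/2) = 2/3$ but $r_0(1/2,1) = 0/0 = 0$, so $r_0(\cdot,1)$ is not isotonic. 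Your weighted-average step tacitly assumes $Q_0((x',y])>0$, which need not hold. This same example shows your $y^{**}$ need not exist: $\bar x_0 = 2$, $\underline x_1 = 0$, and the left-support set of $Q_0$ is $\{0,2\}$, disjoint from $(0,2)$. The vague appeal to ``$Q_0 \lelr Q_1$ ruling out degenerate atomic configurations'' does not rescue this.

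The paper sidesteps both problems by working with the bounded ratios $\rho_j := \d Q_*/\d(Q_j+Q_*) \in [0,1]$ rather than $r_j = Q_*/Q_j$; these are genuinely monotone by Lemma~\ref{Lem:IRN}, with no $0/0$ or $\infty$ cases. Boundedness of the density is then obtained by a compactness argument showing $\rho_0+\rho_1 \le 2-\delta$ globally (this is where the support condition enters), rather than via a single reference point. Finally, the paper identifies the limit $g$ with the density by direct computation using the explicit $\rho_j$ from Lemma~\ref{Lem:IRN}, avoiding your appeal to a one-sided Lebesgue--Besicovitch theorem with respect to a general base measure $Q$, which is not a standard off-the-shelf result and would itself need justification. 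Your argument could likely be repaired along these lines, but as written the monotonicity and $y^{**}$ steps do not go through.
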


\begin{proof}[\bf Proof of Lemma~\ref{Lem:LR.sandwich}]
By assumption, for $j = 0,1$, there exists a density $\rho_j$ of $Q_*$ with respect to $Q_j + Q_*$ with values in $[0,1]$, where $\rho_0$ is isotonic and $\rho_1$ is antitonic. Note that $1 - \rho_j$ is automatically a version of $dQ_j/d(Q_j + Q_*)$. An important fact is that $\rho_0 + \rho_1$ is bounded away from $2$, i.e.\ $\rho_0 + \rho_1 \le 2 - \delta$ for some $\delta > 0$. To verify this, suppose the contrary. Then there exists a sequence $(x_k)_k$ such that $\lim_{k \to \infty} \rho_j(x_k) = 1$ for $j = 0,1$. Without loss of generality let $(x_k)_k$ converge to a point $x \in [-\infty,\infty]$. If $x < \infty$, then $\rho_0 \equiv 1$ on $(x,\infty)$, whence $Q_0((x,\infty)) = 0$. Likewise, if $x > - \infty$, then $\rho_1 \equiv 1$ on $(-\infty,x)$, whence $Q_1((-\infty,x)) = 0$. In particular, $x$ has to be a real number, but then our assumption on $Q_0$ and $Q_1$ is obviously violated.

Note first that $A_0 := \{\rho_0 = 0\} < \{\rho_0 > 0\}$ satisfies $Q_*(A_0) = 0$, and then $Q_* \lelr Q_1$ implies that $Q_1(A_0) = 0$. Similarly, $A_1 := \{\rho_1 = 0\} > \{\rho_1 > 0\}$ satisfies $Q_*(A_1) = 0 = Q_0(A_1)$. Thus the real line may be partitioned into the sets $A_0 < A_* := \{\rho_0\rho_1 > 0\} < A_1$, and $Q_*(A_*) = 1$. For measurable functions $h \ge 0$ on $A_*$ and $j = 0,1$,
\[
	\int_{A_*} h \, dQ_* \ = \ \int h \rho_j \, d(Q_* + Q_j) ,
\]
so
\[
	\int_{A_*} h (1 - \rho_j) \, dQ_* \ = \ \int h \rho_j \, dQ_j ,
\]
and replacing $h$ with $h/\rho_j$ shows that
\[
	\int_{A_*} h (\rho_j^{-1} - 1) \, dQ_* \ = \ \int h \, dQ_j .
\]
Consequently,
\[
	\int_{A_*} h
		\bigl( \lambda_0 (\rho_0^{-1} - 1) + \lambda_1 (\rho_1^{-1} - 1) \bigr)
			\, dQ_*
	\ = \ \int_{A_*} h \, dQ .
\]
Note that $\lambda_0 (\rho_0^{-1} - 1) + \lambda_1 (\rho_1^{-1} - 1)$ is not smaller than $\lambda_0 (1 - \rho_0) + \lambda_1 (1 - \rho_1) \ge \min(\lambda_0,\lambda_1) \delta > 0$. Therefore, we may replace $h$ in the previously displayed integrals with $h \big/ \bigl( \lambda_0 (\rho_0^{-1} - 1) + \lambda_1 (\rho_1^{-1} - 1) \bigr)$ and conclude that for measurable functions $h \ge 0$ on $\R$,
\[
	\int h \, dQ_* \ = \ \int h \tilde{g} \, dQ
\]
with
\[
	\tilde{g} \ := \ \begin{cases}
		\bigl( \lambda_0 (\rho_0^{-1} - 1) + \lambda_1 (\rho_1^{-1} - 1) \bigr)^{-1}
			& \text{on} \ A_* , \\
		0	& \text{on} \ \R \setminus A_* .
	\end{cases}
\]	
In other words, $dQ_*/dQ = \tilde{g}$, and $\tilde{g} \le \min(\lambda_0,\lambda_1)^{-1} \delta^{-1}$.

It remains to be shown that $\tilde{g}(y)$ may be replaced with the stated limit $g(y)$. Note first that by Lemma~\ref{Lem:Left.support.Q}, the set $\R \setminus \supp_{\rm left}(Q_*)$ has probability $0$ under $Q_*$, and $g(y) = 0$ for all $y \in \R\setminus \supp_{\rm left}(Q_*)$, because $Q_*((x,y]) = 0$ if $x < y$ is sufficiently close to $y$. It suffices to show that $g(y) = \tilde{g}(y)$ for any $y \in \supp_{\rm left}(Q_*) \subset \supp_{\rm left}(Q)$. If $\rho_0$ and $1 - \rho_1$ are constructed as in the proof of Theorem~\ref{Thm:EquivDef} by means of Lemma~\ref{Lem:IRN}, then for $j = 0,1$,
\[
	\rho_j(x,y) := \frac{Q_*((x,y])}{(Q_j + Q_*)((x,y])} \
	\to \ \rho_j(y) \quad\text{as} \ x \to y\,- ,
\]
and
\[
	Q_j((x,y]) \ = \ (\rho_j(x,y)^{-1} - 1) Q_*((x,y]) .
\]
Consequently, $Q((x,y]) = \bigl( \lambda_0 (\rho_0(x,y)^{-1} - 1) + \lambda_1 (\rho_1(x,y)^{-1} - 1) \bigr) Q_*((x,y])$, and this leads to
\begin{align*}
	\frac{Q_*((x,y])}{Q((x,y])} \
	&= \ \bigl( \lambda_0 (\rho_0(x,y)^{-1} - 1)
		+ \lambda_1 (\rho_1(x,y)^{-1} - 1) \bigr)^{-1} \\
	&\to \ \begin{cases}
		0
			& \text{if} \ y \not\in A_* \\
		\bigl( \lambda_0 (\rho_0(y)^{-1} - 1) + \lambda_1 (\rho_1(y)^{-1} - 1) \bigr)^{-1}
			& \text{if} \ y \in A_*
		\end{cases} \\
	&= \ \tilde{g}(y)
\end{align*}
as $x \to y\,-$.
\end{proof}

\begin{proof}[\bf Proof of Theorem~\ref{Thm:Weak.convergence}]
Note first that by the continuous mapping theorem, the marginal distributions $P_n = R_n(\cdot \times \R)$ converge weakly to $P$. Consequently, the Portmanteau theorem implies that $\liminf_{n\to \infty} P_n(U) \ge P(U)$ for any open set $U \subset \R$, while $\limsup_{n\to\infty} R_n(A) \le R(A)$ for any closed set $A \subset \R\times\R$. Specifically, taking $U = (-\infty,x_1)$ and $U = (x_2,\infty)$ shows that $x_1 < x_2$ are interior points of $\XX_n = \bigl\{ x\in\R\colon P_n((-\infty,x]), P([x,\infty)) > 0 \bigr\}$ for sufficiently large $n$.

For symmetry reasons, it suffices to show that $\liminf_{n\to \infty} q_n(\beta \,|\, x_2) \ge y$ for any fixed $y < \qw(\beta \,|\, x_1)$. The definition of $\qw(\beta \,|\, x_1)$ and the construction of $\Kw$ imply that
\[
	\beta \
	> \ \Kw(x_1,(-\infty,y]) \
	= \ \lim_{a \to a_o-} \frac{R \bigl( (a,x_1] \times (-\infty,y] \bigr)}{P((a,x_1])}
\]
with $a_o := \min\{a \le x_1 : P((a,x_1]) = 0\}$. Hence, there exists a number $a < a_o$ such that $P(\{a\}) = 0$, $P((a,x_1]) > 0$ and $R \bigl( (a,x_1] \times (-\infty,y] \bigr) / P((a,x_1]) < \beta$. Consequently, for any fixed $b \in (x_1,x_2)$ and sufficiently large $n$, the stochastic ordering of $K_n(x,\cdot)$ implies that
\begin{align*}
	\limsup_{n\to\infty} K_n(x_2,(-\infty,y]) \
	&\le \ \liminf_{n\to\infty}
		\frac{R_n \bigl( (a,b] \times (-\infty,y] \bigr)}{P_n((a,b])} \\
	&\le \ \liminf_{n\to\infty}
		\frac{R_n \bigl( [a,b] \times (-\infty,y] \bigr)}{P_n((a,b))} \\
	&\le \ \frac{R \bigl( [a,b] \times (-\infty,y] \bigr)}{P((a,b))}
		\ = \ \frac{R \bigl( (a,b] \times (-\infty,y] \bigr)}{P(a,b)} .
\end{align*}
But the right-hand side converges to $R \bigl( (a,x_1] \times (-\infty,y] \bigr)/P((a,x_1]) < \beta$ as $b \downarrow x_1$, and this implies that $q_n(\beta \,|\, x_1) > y$ for sufficiently large $n$.
\end{proof}

\begin{proof}[\bf Proof of Corollary~\ref{Cor:Weak.convergence}]
Note first that in general, $\qw(\beta \,|\, \cdot)$ and $\qe(\beta \,|\, \cdot)$ are left- and right-cont\-inu\-ous on $\XX$, respectively. For symmetry reasons, it suffices to verify this for $\qw(\beta \,|\, \cdot)$. Let $x \in \XX$ such that $x > \inf(\XX)$. Since $\qw(\beta \,|\, \cdot)$ is increasing on $\XX$, it suffices to show that $\liminf_{a \to x-} \qw(\beta \,|\, a) \ge \qw(\beta \,|\, x)$. If $P((a,x]) = 0$ for some $a < x$, then $\Kw(a',\cdot)$ and thus $\qw(\beta \,|\, a')$ are constant in $a' \in [a,x)$. If $P((a,x]) > 0$ for all $a < x$, then for any $y \in \R$,
\[
	\Kw(x,(-\infty,y]) \
	= \ \lim_{a \to x-} P((a,x])^{-1} \int_{(a,x]} \Kw(t,(-\infty,y]) \, P(\d t) ,
\]
and since $\Kw(t,(-\infty,y])$ is decreasing in $t \in \XX$, the right hand side equals
\[
	\lim_{a \to x-} \Kw(a,(-\infty,y]) .
\]
For any $y < \qw(\beta \,|\, x)$, this shows that $\Kw(a,(-\infty,y]) < \beta$ if $a < x$ is sufficiently close to $x$, whence $\qw(\beta \,|\, a) > y$. This implies that $\liminf_{a \to x-} \qw(\beta \,|\, a) = \qw(\beta \,|\, x)$.

If $\qw(\beta \,|\, \cdot) = \qe(\beta \,|\, \cdot)$ on $(a,b) \subset \XX$, then $\qw(\beta \,|\, \cdot)$ is continuous on $(a,b)$. But then one can easily deduce from Theorem~\ref{Thm:Weak.convergence} that $\lim_{n \to \infty} q_n(\beta \,|\, x) = \qw(\beta \,|\, x)$ for any $x \in (a,b)$. But for any fixed interval $[a',b'] \subset (a,b)$ and sufficiently large $n$, the functions $q_n(\beta \,|\, \cdot)$ are increasing on $[a',b']$, and it is well-known that the pointwise convergence of monotone functions on a compact interval to a continuous function implies uniform convergence.
\end{proof}

\begin{proof}[\bf Proof of Lemma~\ref{Lem:Minimum.Kuiper.distance}]
Let $x_1 < \cdots < x_{\ell}$ and $y_1 < \cdots < y_m$ be real numbers such that the support of $\hat{R}$ is contained in $\{(x_j,y_k) \colon 1\le j\le \ell, 1 \le k\le m\}$. With $x_0 := y_0 := -\infty$ and $x_{\ell+1} := y_{m+1} := \infty$, one can easily show that with the family $\mathcal{S} := \bigl\{ (a_1,a_2] \times (b_1,b_2] : a_1 < a_2, b_1 < b_2 \bigr\}$, for an arbitrary distribution $\tilde{R}$ on $\R\times\R$,
\begin{align*}
	\sup_{S \in \mathcal{S}} \, (\hat{R} - \tilde{R})(S) \
	&= \ \max_{S \in \overline{\mathcal{S}}} \, (\hat{R} - \tilde{R})(S) , \\
\intertext{and}
	\sup_{S \in \mathcal{S}} \, (\tilde{R} - \hat{R})(S) \
	&= \ \max_{S \in \underline{\mathcal{S}}} \, (\tilde{R} - \hat{R})(S) ,
\end{align*}
where
\begin{align*}
	\overline{\mathcal{S}} \
	&:= \ \bigl\{ [x_{j_1}, x_{j_2}] \times [y_{k_1}, y_{k_2}] :
		1 \le j_1 \le j_2 \le \ell, 1 \le k_1 \le k_2 \le m \bigr\} , \\
	\underline{\mathcal{S}} \
	&:= \ \bigl\{ (x_{j_1}, x_{j_2}) \times (y_{k_1}, y_{k_2}) :
		0 \le j_1 < j_2 \le \ell+1, 0 \le k_1 < k_2 \le m+1 \bigr\} .
\end{align*}
Now choose real numbers $x_1' < x_2' < \cdots < x_{2\ell+1}'$ and $y_1' < y_2' < \cdots < y_{2m+1}'$ with $x_{2j}' = x_j^{}$ for $1 \le j \le \ell$ and $y_{2k}' = y_k^{}$ for $1 \le k \le m$. Then $\|\tilde{R} - \hat{R}\|_{\rm K}$ does not change if we replace $\tilde{R}$ with the discrete distribution
\[
	\sum_{j=1}^{2\ell+1} \sum_{k=1}^{2m+1} p_{jk}^{} \delta_{(x_j',y_k')}^{} ,
\]
where
\begin{align*}
	p_{2j,2k}^{} \
	&:= \ \tilde{R} \bigl( \{x_j\} \times\{y_k\} \bigr) \quad
		\text{for} \ 1 \le j \le \ell, 1 \le k \le m , \\
	p_{2j-1,2k}^{} \
	&:= \ \tilde{R} \bigl( (x_{j-1},x_j) \times \{y_k\} \bigr) \quad
		\text{for} \ 1 \le j \le \ell+1, 1 \le k \le m , \\
	p_{2j,2k-1}^{} \
	&:= \ \tilde{R} \bigl( \{x_j\} \times (y_{k-1},y_k) \bigr) \quad
		\text{for} \ 1 \le j \le \ell, 1 \le k \le m+1 , \\
	p_{2j-1,2k-1}^{} \
	&:= \ \tilde{R} \bigl( (x_{j-1},x_j) \times (y_{k-1},y_k) \bigr) \quad
		\text{for} \ 1 \le j \le \ell+1, 1 \le k \le m+1 .
\end{align*}
Note that the values $\tilde{R}(S)$, $S \in \overline{\mathcal{S}} \cup \underline{\mathcal{S}}$, do not change when we perform this discretization of $\tilde{R}$. Moreover, if $\tilde{R}$ is TP2, then this discretized version is TP2 as well. Consequently, to minimize $\|\tilde{R} - \hat{R}\|_{\rm K}$ over all TP2 distributions $\tilde{R}$, we may restrict our attention to all TP2 distributions on the finite grid $\{x_1',\ldots,x_{2\ell+1}'\} \times \{y_1',\ldots,y_{2m+1}'\}$. Then $\|\tilde{R} - \hat{R}_{\rm emp}\|_{\rm K}$ is a continuous function of the corresponding matrix $\boldsymbol{p} = (p_{jk})_{j,k} \in [0,1]^{(2\ell+1)\times(2m+1)}$, and the constraints on $\tilde{R}$ define a compact subset of $[0,1]^{(2\ell+1)\times(2m+1)}$. Consequently, a minimizer does exist.
\end{proof}

\paragraph{Acknowledgements.}
This work was supported by Swiss National Science Foundation. We thank Johanna Ziegel and Alexander Jordan for stimulating discussions and Tilmann Gneiting for pointing us to the connection between likelihood ratio order and concavity of ROC curves. We are also grateful for constructive comments of two referees.



\vfill

\parbox[t]{0.49\textwidth}{Lutz D\"umbgen\\
Department of Mathematics and Statistics\\
University of Bern\\
Alpeneggstrasse 22\\
CH-3012 Bern\\
Switzerland\\[0.5ex]
lutz.duembgen@unibe.ch}
\hfill
\parbox[t]{0.49\textwidth}{Alexandre M\"osching\\
Nonclinical Biostatistics\\
F.\ Hoffmann-La Roche Ltd\\
Grenzacherstrasse 124\\
CH-4058 Basel\\
Switzerland\\[0.5ex]
alexandre.moesching@roche.com}

\end{document}